\newtheorem{teo}{Theorem}[section]
\newtheorem{prop}[teo]{Proposition}
\newtheorem{lem}[teo]{Lemma}
\newtheorem{coro}[teo]{Corollary}
\newtheorem{defi}[teo]{Definition}
\theoremstyle{definition}
\newtheorem{rem}[teo]{Remark}
\def\h{{\cal H}}
\def\b{{\cal B}}
\def\fh{ {\cal J}({\cal H})  }
\def\bh{{\cal B}({\cal H})}
\def\kh{ {\cal K}(\h) }
\def\u{ {\cal U}({\cal H}) }
\def\ufi{ {\cal U}_{\phi} }
\def\pm{ {\cal P}({\cal H})}
\def\nf{ {\|\cdot\|_{\phi}}}
\def\nfp{ {\|\cdot\|_{\phi'}}}
\def\IF{ {\cal I}_{\phi} }
\def\IFO{ {\cal I}_{\phi}^{(0)} }
\def\IFp{ {\cal I}_{\phi'} }
\def\s{\mathcal{S}}
\begin{document}


\title{Short paths for symmetric norms in the unitary group\footnote{2010 MSC. Primary 47B10, 53C23;  Secondary 51F25, 53C22}}
%
\date{}
\author{Jorge Antezana, Gabriel Larotonda and Alejandro Varela}

\maketitle

\begin{abstract} For a given symmetrically normed ideal ${\cal I}_{\phi}$ on an infinite dimensional Hilbert space ${\cal H}$, we study the rectifiable distance in the classical Banach-Lie unitary group
$$
{\cal U}_{\phi}=\{u\mbox{ a unitary operator in }{\cal H}, \, u-1\in {\cal I}_{\phi}\}.
$$
We prove that one-parameter subgroups of ${\cal U}_{\phi}$ are short paths, provided the spectrum of the exponent is bounded by $\pi$, and that any two elements of ${\cal U}_{\phi}$ can be joined with a short path, thus obtaining a Hopf-Rinow theorem in this infinite dimensional setting, for a wide and relevant class of (non necessarily smooth) metrics. Then we prove that the one-parameter groups are the unique short paths joining given endpoints, provided the symmetric norm considered is strictly convex. {\bf This preprint has been replaced by \cite{alv} where the general case of convex Lagrangians in the unitary group is studied with similar techniques}.\footnote{Keywords and phrases: geodesic segment, ideal of compact operators, $p$-norm, short path, symmetric norm, unitary group.}
\end{abstract}

\section{Introduction}

The group of unitary operators on Hilbert space carries, as any Banach-Lie group, a canonical connection without torsion $\nabla_XY=\frac12 [X,Y]$, whose geodesics are the one-parameter groups  $t\mapsto ue^{tz}$ (here $u$ is a unitary operator and $z$ an anti-Hermitian operator). In the finite dimensional setting, the trace is available to introduce a Riemannian metric on the unitary group in a standard fashion 
$$
\langle x,y\rangle_g=Tr(u^*x(u^*y)^*)=Tr(xy^*),
$$
for $u^*x,u^*y$ in the Lie algebra of the group, that is, for $u^*x,u^*y$ anti-Hermitian matrices. It is well-known that the connection just introduced is in fact the Levi-Civita connection of the metric $g$ induced by the trace, and that geodesics are short provided the spectrum of $z$ is bounded by $\pi$. 

Now consider the bi-invariant Finsler metric given by the uniform norm,
$$
\|x\|_u=\|u^*x\|=\|x\|
$$
for any $x$ tangent to a unitary operator $u$. Remarkably, if one keeps the connection but changes the metric, the geodesics of the connection are still short for the induced rectifiable distance (which, as in the Riemannian setting, is computed as the infimum of the length of piecewise smooth curves joining given endpoints, and $L(\gamma)=\int_0^1 \|\dot{\gamma}\|dt$). The proof of this well-known fact deserves to be sketched here since is short and elegant: if $\gamma$ is a path of unitaries such that $\gamma(0)=u,\gamma(1)=v=ue^z$, with $z$ any anti-Hermitian logarithm of $u^*v$ of norm bounded by $\pi$, let $\rho$ be a norming state for $-z^2\ge 0$ and $\xi$ be a unit norm vector in the Hilbert space ${\cal H}_{\rho}$ of the Gelfand-Neimark-Segal representation induced by $\rho$, such that $-z^2\cdot \xi =\|z\|^2\xi$ (choose $\xi$ to be the class of the identity operator). Then the map $w\mapsto w\cdot \xi$, from the unitary group to the unit sphere of ${\cal H}_{\rho}$, with its standard Riemann-Hilbert metric, sends the one-parameter  group $\delta(t)=ue^{tz}$ to a geodesic of the unit sphere, with equal lengths. Thus $ue^{tz}\cdot \xi$ is shorter than $\gamma\cdot \xi$, and 
$$
L(\gamma)=\int\|\dot{\gamma}\|\ge \int \|\dot{\gamma}\cdot\xi\|_{{\cal H}_{\rho}}\ge \int \|(ue^{tz}\cdot\xi)\dot{\,}\|_{{\cal H}_{\rho}}=\|z\|=L(\delta).
$$
This idea can be traced back to a paper by Porta and Recht \cite{pr}, where it was used to prove the minimality of certain paths in the Grassmann manifold of a $C^*$-algebra, see also \cite{odospe} for a Hopf-Rinow theorem in the context of (infinite-dimensional) Grassmann manifolds.

\medskip

In recent years, there has been an increasing interest in the geometric properties of ideals ${\cal I}$ of compact operators on Hilbert space (see \cite{belti,brgafa,dykema} and the references therein), and in particular in the Lie and representation theory of the subgroups of the linear group $G_{\cal I}$, which consist of invertible operators $g$ on ${\cal H}$ such that $g-1\in {\cal I}$. Of particular relevance is the group of unitary operators, since it acts isometrically on the ideals that arise from a symmetric norm, and gives rise to several homogeneous spaces such as the Grassmann manifold, which can be viewed as the coadjoint orbit of a fixed projection. 

\medskip

In this paper we focus on the metric properties of the unitary groups arising from a symmetric norm on compact operators. We study short curves, and we prove what we think is a fundamental result in the interplay between the algebraic and geometric properties of these groups: one-parameter subgroups are short geodesics for the rectifiable distance induced by the symmetric norm, thus extending the above mentioned result for the Finsler metric induced by the uniform norm. We also prove that these geodesics are unique among short paths joining its endpoints, provided the symmetric norm involved is strictly convex. Since the paper is intended for those interested in metric geometry and operator algebras alike, we have tried to keep the prerequisites to a minimum, and we also included the proof of some perhaps well-known facts in both areas to make the paper more readable.

\medskip

This paper is organized as follows. In Section \ref{sn} we fix our notation regarding symmetric norms and the ideals of compact operators induced. In Section \ref{distancias} we recall the notions of rectifiable length and rectifiable distance associated to a symmetric norm in $\u$. Then in Section \ref{gs} we introduce the notion of geodesic segment, which is a one-parameter group in $\ufi$ such that its initial speed has spectrum contained in $[-i\pi,i\pi]$. We prove a  certain triangle inequality valid for any symmetric norm (Proposition \ref{triad}), the key tool to the proof of Theorem \ref{distancia}. This is one of the two main results of this paper, and states that geodesics segments are short for {\em any symmetric norm}. In Section  \ref{unicas}, we recall the notion of dual norm and prove some useful properties in connection with conditional expectations. We use these tools to prove Corollary \ref{esunica}, which is the other main result of this paper, and states that if the symmetric norm is strictly convex, then geodesic segments are the {\em unique} short paths joining given endpoints in $\ufi$.

\section{Symmetric norms}\label{sn}

Let ${\cal H}$ be a complex separable and infinite dimensional Hilbert space. In this paper $\bh$, ${\cal K}({\cal H})$ and $\fh$ stand for the sets of bounded linear operators, compact operators and finite rank operators in ${\cal H}$ respectively. The unitary group of $\bh$ is indicated by $\u$, and $\pm$ is the set of self-adjoint projections in $\bh$. If $x\in \bh$, then $\|x\|$ stands for the usual uniform norm, and we will use $|\cdot|$ to indicate the modulus of an operator, i.e. $|x|=\sqrt{x^*x}$. We indicate with $\bh_{h}$ (resp. $\bh_{ah}$) the real linear spaces of Hermitian (resp. anti-Hermitian) elements of $\bh$.

\medskip

If $\nf:\bh\to \mathbb R\cup \{\infty\}$ is a norm, let
$$
\IF=\{x\in \bh: \|x\|_{\phi}<\infty\}
$$
stand for the set of operators with finite norm. A norm $\nf$ is called \textit{symmetric} if 
$$
\|xyz\|_{\phi}\le \|x\|\|y\|_{\phi}\|z\|
$$
for any $x,y,z\in bh$. The standard references on the subject are the books by Gohberg and Krein \cite{gohberg} and Simon \cite{simon}. The set $\IF$ is a bilateral ideal in $\bh$. Let 
$$
\IFO=\overline{\fh}^{\|\cdot\|_{\phi}}
$$
stand for the closure of the finite rank operators in the symmetric norm, which is again a bilateral ideal of compact operators. Clearly $\IFO\subset \IF$, while if $\IFO=\IF$ we say that $\|\cdot\|_{\phi}$ is {\em regular}. The ideals $\IFO,\IF$ are the {\em minimal} and {\em maximal} ideals related to the symmetric norm, according to the classical literature \cite[Chapter 2]{simon}. It will be assumed (unless otherwise stated) that $\|\cdot\|_{\phi}$ is {\em inequivalent} to the uniform norm of $\bh$, so $\IFO,\IF$ are proper ideals of compact operators. Some elementary useful properties include the following:
\begin{itemize}
\item {\em unitary invariance}: for any $u,v\in \u$, $\|uxv\|_{\phi}=\|x\|_{\phi}$.
\item {\em gauge invariance}: if $x=u|x|$ is the polar decomposition of $x$, then  $\|x\|_{\phi}=\|\, |x|\, \|_{\phi}$.
\item {\em invariance for the adjoint}:  for any $x$, it holds $\|x\|_{\phi}=\|x^*\|_{\phi}$.
\end{itemize}

\bigskip

What we are interested in here is in the unitary groups of $\|\cdot\|_{\phi}$, that is
$$
\ufi=\{u\in \u: u-1\in \IF\}\,\mbox{ or } \ufi^{(0)}=\{u\in \u: u-1\in \IFO\}.
$$
The topology in these groups  is given by the metric 
$$
d(u,v)=\|u-v\|_{\phi}=\|(u-1)-(v-1)\|_{\phi}.
$$
Well-known examples of symmetric norms are the $p$-norms ($p\ge 1$) given by
$$
\|x\|_p= Tr( |x|^p )^{\frac1p},
$$
where $Tr$ is the usual (infinite) trace of $\bh$, and also the Ky-Fan $k$-norms. In the case of the $p$-norms, the regular ideals induced are called {\em $p$-Schatten ideals}, and the unitary groups are examples of  what Pierre de la Harpe calls {\em classical Banach-Lie groups} of operators on Hilbert space  \cite{dharpe}. Note that for $p=\infty$, $\cal \IF=\bh$ while $\IFO=\kh$.

\section{The unitary group and its rectifiable distances}\label{distancias}

The unitary group $\u$ of $\bh$ is a smooth manifold in the uniform norm of $\bh$, and it is in fact a real Banach-Lie subgroup of the full linear group. Its Banach-Lie algebra can be identified with $\bh_{ah}$, and if $\gamma:[0,1]\to \u$ is a smooth curve, then $\gamma^*(t)  \dot{\gamma}(t)\in \bh_{ah}$ for any $t\in [0,1]$. Here smooth means $C^1$ and with non-vanishing derivative.

\medskip

It is well-known that if one endows the tangent spaces of $\u$ with the Finsler metric that consists of measuring vectors with the uniform norm of $\bh$,
$$
\|\dot{\gamma}\|_{\gamma}=\|\gamma^*\dot{\gamma}\|=\|\dot{\gamma}\|,
$$
then the curves $\delta(t)=e^{itz}$, with Hermitian symbol $z$, are shorter than any other piecewise smooth curve joining its endpoints, provided $\|z\|\le \pi$ (see the introduction for a proof). Thus the rectifiable distance related to the uniform norm is also known as {\em exponential length} in the unitary group (of a $C^*$-algebra), and in that setting it is a relevant invariant of the classification theory, see \cite{phil} and the references therein.

However, in $\bh$ (in fact, in any von Neumann algebra), any pair of elements $u,v\in \u$ are conjugated by an  Hermitian $z$ with $\|z\|\le\pi$  (i.e. there exists $z\in \bh_h$ such that $\|z\|\le \pi$ and $ue^{iz}=v$). This element is unique if $\|u-v\|<2$, and in that case $\|z\|<\pi$. Thus if one defines the rectifiable distance as the infimum of the lengths of curves joining its endpoints,
$$
d_{\infty}(u,v)=\inf\left\{ \int_0^1 \|\dot{\gamma}\| :\, \gamma\mbox{ is piecewise smooth and joins } u \mbox{ to } v \mbox{ in }\u\right\},
$$
then $d_{\infty}(u,v)=\|z\|$. 

\begin{rem}\label{compare}
Recall that there exists a constant $c_{\phi}$ such that $\|x\|\le c_{\phi}\|x\|_{\phi}$ for any symmetric norm $\nf$ and any $x\in\IF$, see for instance \cite[page 65 and the footnote at page 58]{gohberg}. The number $c_{\phi}$ is the value of $\nf$ on any one-dimensional projection. So we might as well assume from now on that $c_{\phi}=1$, i.e. $\|x\|\le \|x\|_{\phi}$ for any $x\in\IF$. In particular, convergence in $\IF$ implies convergence in the uniform norm, and a continuous curve in $\IF$ is continuous for the uniform norm.
\end{rem}

We will consider piecewise smooth curves $\gamma:[0,1]\to \ufi$. Note that $\gamma,\dot{\gamma}$ are continuous also in the uniform norm. Since $\dot{\gamma}\in \IF$, it is legitimate to define
$$
\|\dot{\gamma}\|_{\gamma}=\|\dot{\gamma}\|_{\phi}.
$$
\begin{defi}The rectifiable length of $\gamma$ is
$$
L_{\phi}(\gamma)=\int_0^1 \|\dot{\gamma}\|_{\phi},
$$
and the rectifiable distance among $u,v\in \ufi$ is
$$
d_{\phi}(u,v)=\inf\left\{ L_{\phi}(\gamma) :\, \gamma:[0,1]\to\ufi\mbox{ is piecewise smooth and joins } u \mbox{ to } v \mbox{ in }\ufi\right\}.
$$
\end{defi}

The function $d_{\phi}$ is in fact a distance, since  $\|u-v\|_{\phi}\le d_{\phi}(u,v)$ for  any $u,v\in \ufi$, see the related Proposition \ref{comple} below.

One of the main features of this metric is that is invariant for the action of the unitary group $\u$, in fact it is a bi-invariant metric
$$
d_{\phi}(uv_1w,uv_2w)=d_{\phi}(v_1,v_2)
$$
for $u,w\in \u$ and $v_1,v_2\in\ufi$.

\begin{rem}\label{logaritmo}
Observe that, for given $u\in \ufi$, $u-1$ is a normal compact operator and thus admits a spectral decomposition relative to a family of disjoint projections $\{p_k\}_{k\in\mathbb N}$, namely $u-1=\sum \alpha_k p_k$ for some bounded sequence $\alpha_k$. In fact $\lim_k\alpha_k=0$ since $u-1$ is compact. Now let $p_0=1-\sum p_k$ thus
$$
u=p_0+\sum_k (1+\alpha_k) p_k.
$$
It is then clear that $|1+\alpha_k|=1$ for any $k\in\mathbb N$. Let $\{t_k\}_{k\in\mathbb N}$ be a sequence of real numbers such that $|t_k|\le \pi$ and $e^{it_k}=1+\alpha_k$. In particular, $t_k\to_k 0$. From the elementary inequality
\begin{equation}\label{comparando}
|e^{it}-1|=\sqrt{2(1-\cos t)}\ge |t|\sqrt{1-\frac{t^2}{12}},
\end{equation}
valid for any $t\in [-\pi,\pi]$, and the fact that
$$
|t|\sqrt{1-\frac{t^2}{12}}\ge \frac12|t|
$$
for sufficiently small $t$, we derive that $|t_k|\le 2 |\alpha_k|$ for sufficiently large $k$. Thus the operator $z\in\bh$ defined by $z=\sum_k t_k p_k$ is clearly compact and moreover $z\in \IF$,  since $|\alpha_k|$ are the singular values of $u-1$ (see \cite[page 69]{gohberg}).
\end{rem}

\begin{rem}\label{loga}
The previous remark shows that for any $u\in\ufi$ there exists an Hermitian $z\in \IF$ with $\|z\|\le\pi$ such that $e^{iz}=u$. Moreover, if $\|u-1\|<2$ (or equivalently $\|z\|<\pi$), this $z$ is unique by the uniqueness of the principal branch of the analytic logarithm of $u$ in $\bh$. Then any pair of unitaries $u,v\in\ufi$ can be conjugated with an  Hermitian $z\in \IF$, $ue^{iz}=v$, with $\|z\|\le\pi$, and one can consider the elementary smooth curve
$$
\delta(t)=ue^{itz}
$$
which joins them in $\ufi$. Its rectifiable length is $\|z\|_{\phi}$. It is known, for the $p$-norms of $\bh$, with $p\ge 2$, that these curves are shorter that any other piecewise smooth curve joining $u$ to $v$, and hence
$$
d_p(u,v)=\|z\|_p,\quad p\ge 2.
$$
Moreover the curve is unique, among $C^2$ curves joining $u,v$, if $\|z\|<\pi$ and $p<\infty$. The proof of these facts can be found in \cite[Theorem 3.2]{upe}. See also \cite{upefinita} for related results on the unitary group of a finite von Neumann algebras and their homogeneous spaces.
\end{rem}

\section{Minimality of geodesic segments}\label{gs}

Let us establish some facts concerning the curves $t\mapsto ue^{itz}$ for Hermitian $z\in \IF$ joining $u$ and $v=ue^{iz}$ in $\ufi$. We will call such paths  \textit{geodesic segments}. In this section we prove that the geodesic segments are short among piecewise smooth paths in $\ufi$, when we measure them with the rectifiable distance. We begin with a technical lemma.

\begin{lem} \label{exponentes}
Let $x,y\in {\cal K}({\cal H})_h$ and assume that $e^{ix}=e^{iy}$, $\|x\|<\pi$. Then $x$ commutes with $y$, and $|x|\le |y|$. Moreover, if $\|y\|\le \pi$, then $x=y$.
\end{lem}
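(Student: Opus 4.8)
The plan is to translate the relation $e^{ix}=e^{iy}$ into spectral information via functional calculus. Put $u=e^{ix}=e^{iy}$. Since $\|x\|<\pi$ we have $\sigma(x)\subset(-\pi,\pi)$, and as $\sigma(u)=\{e^{it}:t\in\sigma(x)\}$ this gives $-1\notin\sigma(u)$. Hence the branch $t\mapsto e^{it}$, $t\in(-\pi,\pi)$, has a continuous inverse $g$ defined on $\sigma(u)$; applying functional calculus together with the identity $g(e^{it})=t$ on $\sigma(x)$ yields $x=g(u)$. In particular $x$ lies in the abelian $C^*$-algebra generated by $u$, and since $y$ commutes with $u=e^{iy}$ it commutes with $x$. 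This settles the first assertion.

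Next I would exploit that $x$ and $y$ are now \emph{commuting} compact self-adjoint operators. Each eigenspace $E_\mu=\ker(x-\mu)$ reduces $y$ (both $E_\mu$ and its orthogonal complement are $y$-invariant), and $y|_{E_\mu}$ is again compact self-adjoint, hence diagonalizable; splitting accordingly one obtains an orthogonal decomposition $\h=\bigoplus_{\mu,\nu}E_{\mu,\nu}$ with $E_{\mu,\nu}=\ker(x-\mu)\cap\ker(y-\nu)$. On a nonzero $E_{\mu,\nu}$ the hypothesis forces $e^{i\mu}=e^{i\nu}$, that is $\nu=\mu+2\pi k$ for some $k\in\mathbb Z$. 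Because $|\mu|<\pi$, an elementary case check gives $|\nu|\ge|\mu|$, strictly unless $k=0$: indeed $k\ge1$ forces $\nu>\pi$ and $k\le-1$ forces $\nu<-\pi$. Since $|x|$ acts as $|\mu|$ and $|y|$ as $|\nu|$ on $E_{\mu,\nu}$, and these subspaces span $\h$, we conclude $|x|\le|y|$.

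For the last statement, if $\|y\|\le\pi$ then $\sigma(y)\subseteq[-\pi,\pi]$; but $e^{\pm i\pi}=-1\notin\sigma(u)=\sigma(e^{iy})$, so $\pm\pi\notin\sigma(y)$ and in fact $\sigma(y)\subset(-\pi,\pi)$. The same functional-calculus identity now gives $y=g(u)=x$. (Alternatively, on each $E_{\mu,\nu}$ the constraints $|\mu|<\pi$ and $|\nu|\le\pi$ are incompatible with $k\ne0$, so $\nu=\mu$ throughout and $x=y$.) I do not anticipate a genuine obstacle here; the one point needing care is justifying the simultaneous diagonalization of the two commuting compact self-adjoint operators and tracking the endpoint values $\pm\pi$ — and this can be sidestepped altogether by working only with $x=g(u)$ and the spectral mapping theorem.
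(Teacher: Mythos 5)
Your proof is correct, and it takes a genuinely different route from the paper's on both halves of the argument. For commutativity, the paper differentiates $\exp(e^{ity}ixe^{-ity})=e^{iy}$ at $t=0$ and invokes invertibility of $d\exp$ at $ix$ (citing a lemma from \cite{upe}), whereas you observe that $\|x\|<\pi$ forces $-1\notin\sigma(u)$, so $x=g(u)$ for a continuous branch $g$ of $\frac1i\log$; then $x$ lies in $C^*(u,1)$, which obviously commutes with $y$. This is cleaner and avoids the Lie-theoretic machinery. For the inequality $|x|\le|y|$, the paper writes $x=\sum\alpha_kp_k$, $y=\sum\beta_kp_k$ (simultaneous diagonalization, stated without justification), derives $x-y=2\pi\sum n_jq_j$, squares $yp_k=xp_k+2\pi\sum n_jq_jp_k$ and does a sign analysis on $n_j(\pi n_j+\alpha_k)\ge0$. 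You instead decompose $\h=\bigoplus E_{\mu,\nu}$ into joint eigenspaces (which you correctly justify: each $\ker(x-\mu)$ is $y$-invariant and $y$ restricted there is compact self-adjoint, hence diagonalizable) and then compare eigenvalues pointwise: $e^{i\mu}=e^{i\nu}$ with $|\mu|<\pi$ forces $\nu=\mu+2\pi k$ with $|\nu|>\pi>|\mu|$ whenever $k\ne0$. This is essentially the paper's squaring argument unwound into its scalar content; your version is more transparent and actually makes the implicit simultaneous-diagonalization step explicit. Both approaches reach the same conclusion with comparable effort; the paper's gains nothing over yours here, and your commutativity argument is the more elementary of the two.
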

\begin{proof}
Let $t\in\mathbb R$. Then $e^{ity}e^{ix}e^{-ity}=e^{ity}e^{iy}e^{-ity}=e^{iy}$. This is equivalent to 
$$
\exp(e^{ity}ixe^{-ity})=e^{iy}.
$$
Differentiating at $t=0$ we obtain $\exp_{*ix}([y,x])=0$. Since $\|ix\|<\pi$, the map $\exp_{*ix}$ is invertible (see for instance \cite[Lemma 3.3]{upe}), and the first claim $[y,x]=0$ follows. Now let $\{p_k\}_{k\in\mathbb Z}\subset \pm$ be a family of disjoint projections (i.e $p_ip_j=0$ if $i\ne j$) such that $x=\sum \alpha_k p_k$, $y=\sum\beta_k p_k$, with $\alpha_k,\beta_k$ real numbers.  Thus $e^{ix-iy}=1$ implies that there exist integers $\{n_j\}_{j=1\dots m}$, and disjoint projections $\{q_j\}_{j=1\dots m}$ such that
$$
\sum_k (\alpha_k-\beta_k)p_k=2\pi\sum_j n_j q_j.
$$
If $\|y\|\le \pi$, then multiplying by a fixed $q_j$ and taking norms gives
$$
2\pi |n_j|\le \|x-y\|<2\pi,
$$
which in turn implies $n_j=0$ for any $j=1\dots m$, or equivalently $x=y$.

In the general case, we have
$$
y-x=2\pi i \sum n_j q_j.
$$
Let ${\cal A}\subset\kh$ stand for the abelian $C^*$-algebra generated by the projections $\{p_k\}_{k\in\mathbb N}$. Then $y-x\in {\cal A}$, thus $q_j\in{\cal A}$ for any $j$ because $q_j$ can be obtained via a uniform limit of polynomials in $x-y$. In particular, $q_j$ commutes with $p_k$ for any $j,k$.

From $yp_k=xp_k+2\pi \sum_j n_j q_jp_k$, squaring both sides and rearranging terms we obtain
$$
\beta_k^2 p_k=\alpha_k^2 p_k+4\pi  \sum_j n_j(\pi n_j+\alpha_k)q_j p_k.
$$
Note that $q_jp_k$ is positive and also that $n_j(\pi n_j+\alpha_k)\ge 0$ for any $j,k$, since $|\alpha_k|< \pi$. This implies
$$
\beta_k^2 p_k\ge\alpha_k^2 p_k,
$$
thus $|\beta_k|p_k\ge |\alpha_k|p_k$, and summing over $k$ yields $|y|\ge |x|$.
\end{proof}

\begin{rem}\label{maschico}
 Note that if $0\le s\le t\in \IF$, there exists $a\in \bh$ such that $\|a\|\le 1$ and $s=at$; in particular $s\in \IF$ and $\|s\|_{\phi}\le \|a\|\|t\|_{\phi} \le \|t\|_{\phi}$ for any symmetric norm $\nf$.
\end{rem}

The following proposition is our fundamental tool to prove that geodesic segments are short. Following an idea introduced by Li, Qiu and Zhang  in \cite{zhang1,zhang2} to study angular metrics in the finite dimensional Grassmannian, we prove a triangle inequality by using a remarkable result by Thompson \cite{thompson}, a result that until not so lang ago had an incomplete proof since it depends on the validity of Horn's conjecture on the spectrum of the sum of two Hermitian matrices (see \cite{fulton} and the references therein).

\begin{prop}\label{triad}
Let $x, y\in \kh_h$. If $z$ is a Hermitian operator such that $e^{i x}e^{i y}=e^{i z}$, and $\|z\|<\pi$, then there exist unitary operators $u_k$ and $v_k$ $\in\b(\h)$, for $k\in\mathbb N$, such that
$$
e^{i\, z}=\lim_{k\to\infty}e^{i\,  u_kxu_k^*+i\, v_kyv_k^*}
$$
in the uniform topology. Moreover, if $x,y\in \IF$ for some symmetric norm $\|\cdot\|_{\phi}$, then $z\in \IF$ and
$$
\|z\|_{\phi}\le \|x\|_{\phi}+\|y\|_{\phi}.
$$
\end{prop}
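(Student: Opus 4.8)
The plan is to reduce the problem to a finite dimensional statement, where Thompson's theorem on exponentials of Hermitian matrices applies, and then to pass to a limit; this is exactly why the conclusion is phrased with an approximation rather than an equality. Since $x,y$ are compact, I fix an increasing sequence of finite rank projections $p_k\nearrow 1$ (strongly) and set $x_k=p_kxp_k$, $y_k=p_ky p_k$. Then $x_k\to x$ and $y_k\to y$ uniformly, while the symmetric norm inequality gives $\|x_k\|_\phi\le\|x\|_\phi$ and $\|y_k\|_\phi\le\|y\|_\phi$ for every symmetric norm. Each of $e^{ix_k}$, $e^{iy_k}$ acts as the identity on $(1-p_k)\h$, so $e^{ix_k}e^{iy_k}$ restricts to a unitary matrix on the finite dimensional space $p_k\h$ and to the identity on $(1-p_k)\h$.

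Next I apply Thompson's theorem on $p_k\h$, in the version for unitary matrices: for Hermitian matrices $a,b$ there exist unitaries $U,V$ with $e^{ia}e^{ib}=e^{i(UaU^*+VbV^*)}$. Extending the resulting unitaries by $1$ on $(1-p_k)\h$, I obtain $u_k,v_k\in\u$ such that $e^{ix_k}e^{iy_k}=e^{i(u_kx_ku_k^*+v_ky_kv_k^*)}$. Since $u_k(x-x_k)u_k^*+v_k(y-y_k)v_k^*\to 0$ in norm and $e^{ix_k}e^{iy_k}\to e^{ix}e^{iy}=e^{iz}$, continuity of the exponential gives $e^{i(u_kxu_k^*+v_kyv_k^*)}\to e^{iz}$ uniformly, which is the first assertion; note that the $u_k,v_k$ were constructed from $x,y$ alone, so the same sequence will serve every symmetric norm at once.

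For the norm estimate I keep these $u_k,v_k$, write $w_k=u_kx_ku_k^*+v_ky_kv_k^*$, and observe $e^{iw_k}=e^{ix_k}e^{iy_k}\to e^{iz}$. Because $\|z\|<\pi$, the spectrum of $e^{iz}$, hence (for $k$ large) that of $e^{iw_k}$, avoids $-1$; let $z_k$ be the corresponding principal logarithm, a finite rank Hermitian operator with $e^{iz_k}=e^{iw_k}$ and $\|z_k\|<\pi$, so that $z_k\to z$ uniformly by continuity of the principal branch of the logarithm. Lemma \ref{exponentes}, applied to $z_k$ and $w_k$, yields $|z_k|\le|w_k|$, and then, using Remark \ref{maschico}, gauge invariance, unitary invariance and the triangle inequality,
$$
\|z_k\|_\phi=\||z_k|\|_\phi\le\||w_k|\|_\phi\le\|x_k\|_\phi+\|y_k\|_\phi\le\|x\|_\phi+\|y\|_\phi .
$$
Finally, since $z_k\to z$ uniformly and a symmetric norm is lower semicontinuous for uniform convergence (the singular values converge and the underlying symmetric gauge function has the Fatou property), I conclude $z\in\IF$ and $\|z\|_\phi\le\liminf_k\|z_k\|_\phi\le\|x\|_\phi+\|y\|_\phi$.

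The hard part is genuinely the second step: the only substantial input is Thompson's theorem — the fact, equivalent to Horn's conjecture on the spectrum of sums of Hermitian matrices and hence only recently possessing a complete proof, that $e^{ia}e^{ib}$ is again $e^{i(UaU^*+VbV^*)}$ for suitable unitaries $U,V$ — and it is a finite dimensional result, which is what forces the whole truncation-and-limit mechanism and the accompanying care: one must use $\|z\|<\pi$ to have a well-defined principal logarithm with $z_k\to z$, and one must invoke lower semicontinuity to carry the symmetric norm bound past the limit.
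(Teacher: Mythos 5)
Your proof is correct and follows essentially the same strategy as the paper: truncate to finite rank, invoke Thompson's theorem, take the principal logarithm, apply Lemma \ref{exponentes} and Remark \ref{maschico} for the pointwise bound, and pass to the limit via the noncommutative Fatou property. The only difference is cosmetic: you cut $x$ and $y$ with a single increasing sequence $p_k\nearrow 1$ of finite-rank projections, so $x_k,y_k$ already live on the common subspace $p_k\h$, whereas the paper truncates each operator along its own spectral decomposition and then works on the sum of ranges $\mathcal{S}_k=R(x_k)+R(y_k)$; both give the needed bounds $\|x_k\|_\phi\le\|x\|_\phi$, $\|y_k\|_\phi\le\|y\|_\phi$ and a common finite-dimensional block on which to apply Thompson.
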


\begin{proof}
If $a$ and $b$ are Hermitian matrices in $M_n(\mathbb C)$ the problem was studied by Thompson \cite{thompson}. In this case he proved that there exist unitary matrices $u_n,v_n\in M_n(\mathbb C)$ such that
\begin{equation}\label{formula Thompson}
e^{i\,a}e^{i\,b}= e^{i\, u_nau_n^*+i\, v_nbv_n^*}.
\end{equation}

Let us approximate in the uniform norm the compact Hermitian operators $x$ and $y$ with finite range Hermitian operators $x_k,y_k\in \fh$ for $k\in\mathbb N$.
That implies that $\lim_{k\to\infty} e^{i\, x_k}=e^{i\, x}$, $\lim_{k\to\infty} e^{i\, y_k}=e^{i\, y}$ and
\begin{equation}\label{conv unif}
\lim_{k\to\infty} e^{i\, x_k} e^{i\, y_k}=e^{i\, x}e^{i\, y}=e^{i\, z}
\end{equation}
uniformly. Explicitly, since $x$ is compact, so is $|x|$, and let $\alpha_j\ge 0$ be a rearrangement in decreasing order of the eigenvalues of $|x|$ counted with multiplicity. Let $e_j$ stand for the corresponding unit norm eigenvector. Then $\{e_j\}_{j\in \mathbb N}$ is an orthonormal set, in fact an orthonormal basis of $\overline{ran|x|}$, that can be extended to an orthonormal basis $\{e_j\}\cup \{e_j'\}$ of 
$$
{\cal H}=\overline{ran|x|}\oplus ker(x).
$$ 
Then if $|x|=\sum_j \alpha_j e_j \otimes e_j$ is the singular value decomposition of $|x|$, where $e_j\otimes e_j$ stands for the one-dimensional projection $\langle e_j,\cdot\rangle e_j$, one obtains $x=\sum_j \alpha_j e_j \otimes (ue_j)$. We let  
$$
x_k=\sum\limits_{j=1}^k \alpha_j e_j \otimes (ue_j),
$$
and with a similar argument we let 
$$
y_k=\sum\limits_{j=1}^k \beta_j f_j \otimes (vf_j),
$$ 
where $y=v|y|$, $\{f_j\}$ is an orthonormal basis of $\overline{ran|y|}$, and $\beta_j\ge 0$ are the singular values of $|y|$ in decreasing order.

Then, if we denote with $R(x_k)$ the finite dimensional range of $x_k$, since $R(x_k)=Ker(x_k)^\perp$, we can identify $x_k$ as an operator from $R(x_k)$ to $R(x_k)$. The same can be done with $y_k$ and $R(y_k)$.
If we define $\s_k=R(x_k)+ R(y_k)$, then $x_k(\s_k)\subset \s_k$ and $y_k(\s_k)\subset \s_k$, and therefore $x_k, y_k\in\b(\s_k)\simeq M_n(\mathbb C)$ (where $n=dim(\s_k)$).

From the finite dimensional result (\ref{formula Thompson}), there exist $u_k, v_k$ unitary linear transformations in $\s_k$ (that means that $u_k u_k^*=p_{\s_k}$ and $v_k v_k^*=p_{\s_k}$, with $p_{\s_k}$ the orthogonal projection on $\s_k$) such that
\begin{equation}\label{formula thompson en sk}
e^{i\,x_k}e^{i\,y_k}= e^{i\, u_k x_k u_k^*+i\, v_k y_k v_k^*}.
\end{equation}

We can extend $u_k, v_k\in\b(\s_k)$ to the unitaries $\tilde u_k=u_k+p_{\s_k^\perp}$ and $\tilde v_k=v_k+p_{\s_k^\perp}$ $\in\b(\h)$. Then from the equality (\ref{formula thompson en sk}) valid in $\s_k$ we get the following in $\b(\h)$

\begin{equation}\label{formula thompson en B(H)}
e^{i\,x_k}e^{i\,y_k}= e^{i\, \tilde u_k x_k \tilde u_k^*+i\, \tilde v_k y_k \tilde v_k^*}.
\end{equation}

By hypothesis $\|z\|<\pi$, then $\|e^{i\, z}-1\|<2$, and therefore $\|e^{i\, x}e^{i\,  y}-1\|<2$. Then, taking $k_0$  large enough
$$
\|e^{i\, x_k}e^{i\, y_k}-1\|<2, \ \ \ \hbox{ for } k\geq k_0.
$$
Let $w_k$ be the unique Hermitian operator such that
$
i\, w_k=\log \left(e^{i\, x_k} e^{i\, y_k}\right)
$
and $\|w_k\|<\pi$. Therefore using this and the expression (\ref{formula thompson en B(H)})
\begin{equation}
\label{wk cumple}
e^{i\, w_k}=e^{i\, x_k} e^{i\, y_k}=e^{i\, \tilde u_k x_k \tilde u_k^*+i\, \tilde v_k y_k \tilde v_k^*}.
\end{equation}
Then using (\ref{conv unif}) and the fact that $\|z\|<\pi$ and $\|w_k\|<\pi$ we get  that
\begin{equation}
\label{wk tiende}
\lim_{k\to\infty} w_k=z,
\end{equation}
in the uniform norm (in particular $z$ is compact). Since $(\tilde u_k x\tilde u_k^*+\tilde v_k y\tilde v_k^*)- (\tilde u_k x_k\tilde u_k^*+\tilde v_k y_k\tilde v_k^*)\to 0$, then using  (\ref{wk cumple}) and (\ref{wk tiende}) we get
$$
e^{i\, (\tilde u_k x\tilde u_k^*+\tilde v_k y\tilde v_k^*)}- e^{i\, z}\to 0
$$
uniformly.

Now, for each $k\in\mathbb N$, from (\ref{wk cumple}) and Lemma \ref{exponentes} we get
$$
|w_k|\leq |\tilde u_k x_k\tilde u_k^*+\tilde v_k y_k\tilde v_k^*|.
$$
Since $x_k, y_k$ are finite range operators, then $w_k$ is also a finite range operator. From Remark \ref{maschico} above follows that
\begin{equation}\label{kfini}
\|w_k\|_{\phi}\le \|x_k\|_{\phi}+\|y_k\|_{\phi}.
\end{equation}

If $x,y\in \IF$, since $x_k=P_kx$ for some orthogonal projection $P_k\in \pm$, one obtains 
$$
\|x_k\|_{\phi}=\|P_k x\|_{\phi}\le \|P_k\| \|x\|_{\phi}=\|x\|_{\phi}.
$$
Similarly $\|y_k\|_{\phi}\le \|y\|_{\phi}$. Thus from (\ref{kfini}) follows that
$$
\|w_k\|_{\phi}\le \|x\|_{\phi}+\|y\|_{\phi},
$$
in particular
$$
\sup_k\|w_k\|_{\phi}<\infty.
$$
Since $w_k\to z$ uniformly, the noncommutative Fatou lemma \cite[Theorem 2.7(d)]{simon} gives us $z\in \IF$ and moreover
$$
\|z\|_{\phi}\le \|x\|_{\phi}+\|y\|_{\phi}.
$$
\end{proof}

\begin{coro}\label{corti}
Let $x,y,z\in {\cal I}_{\phi}$ be Hermitian, $e^{ix}e^{iy}=e^{iz}$. Then if $\|y\|<\pi$ and $\|z\|\le \pi$,
$$
\|z\|_{\phi}\le \|x\|_{\phi}+\|y\|_{\phi}.
$$
\end{coro}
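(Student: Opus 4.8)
The case $\|z\|<\pi$ is precisely Proposition~\ref{triad}, so the whole point is to reach the boundary value $\|z\|=\pi$ (which genuinely occurs: e.g. in $M_2(\mathbb{C})$ take $x=\mathrm{diag}(\pi/2,-\pi/2)$ and $y=\tfrac{\pi}{2}\,\mathrm{Id}$, so that $e^{ix}e^{iy}=\mathrm{diag}(-1,1)$ while $\|y\|=\pi/2<\pi$). The plan is to perturb $z$ slightly inward to a Hermitian $z_0$ with $\|z_0\|<\pi$, absorbing the correction into the factor $e^{iy}$; this is harmless precisely because the hypothesis $\|y\|<\pi$ keeps $\sigma(e^{iy})$ at a positive distance from $-1$, so a small multiplicative perturbation of $e^{iy}$ still admits a Hermitian logarithm of norm $<\pi$, and Proposition~\ref{triad} can then be applied.

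Concretely, since $z\in\IF\subseteq\kh$ is compact, the spectral projections $q_+$ and $q_-$ of $z$ onto the eigenvalues $\pi$ and $-\pi$ have finite rank; put $q=q_++q_-$. For small $\epsilon>0$ set
$$
\tilde y=-\epsilon\pi\,(q_+-q_-),\qquad z_0=z+\tilde y.
$$
Then $\tilde y$ is a finite rank Hermitian operator commuting with $z$, with $\|\tilde y\|=\epsilon\pi$ and $\|\tilde y\|_\phi=\epsilon\pi\,\|q\|_\phi<\infty$; moreover $z_0$ is Hermitian, its eigenvalues $\pi,-\pi$ have been moved to $(1-\epsilon)\pi,-(1-\epsilon)\pi$ while the rest of $\sigma(z)$ stays in a compact subset of $(-\pi,\pi)$, so $\|z_0\|<\pi$, and (using $[z,\tilde y]=0$)
$$
e^{i z_0}=e^{iz}e^{i\tilde y}=e^{ix}e^{iy}e^{i\tilde y}.
$$
For $\epsilon$ small enough one has $-1\notin\sigma(e^{iy}e^{i\tilde y})$: indeed $\|e^{iy}e^{i\tilde y}-e^{iy}\|=\|e^{i\tilde y}-1\|\le\epsilon\pi$, whereas $\mathrm{dist}\bigl(-1,\sigma(e^{iy})\bigr)\ge 2\cos(\|y\|/2)>0$ since $\sigma(y)\subseteq[-\|y\|,\|y\|]$ with $\|y\|<\pi$, and the spectrum of a unitary (a normal operator) moves by at most the operator norm of the perturbation. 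Hence there is a Hermitian $y'\in\IF$ with $\|y'\|<\pi$ and $e^{iy'}=e^{iy}e^{i\tilde y}$.

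Now Proposition~\ref{triad} applies twice: to $e^{iy}e^{i\tilde y}=e^{iy'}$ it gives $\|y'\|_\phi\le\|y\|_\phi+\|\tilde y\|_\phi$, and to $e^{ix}e^{iy'}=e^{iz_0}$ (legitimate since $\|z_0\|<\pi$) it gives $z_0\in\IF$ and $\|z_0\|_\phi\le\|x\|_\phi+\|y'\|_\phi$. Since $z=z_0-\tilde y$, the triangle inequality for $\|\cdot\|_\phi$ yields
$$
\|z\|_\phi\le\|z_0\|_\phi+\|\tilde y\|_\phi\le\|x\|_\phi+\|y\|_\phi+2\|\tilde y\|_\phi=\|x\|_\phi+\|y\|_\phi+2\epsilon\pi\,\|q\|_\phi,
$$
and letting $\epsilon\to0^+$ (here $\|q\|_\phi$ is a fixed finite number) gives the claim.

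The main obstacle is precisely that the estimate must survive at $\|z\|=\pi$, where Proposition~\ref{triad} is unavailable; the naive remedy of scaling (e.g. $x\mapsto rx$, $r\to1^-$) is not obviously sufficient, since nothing prevents the Hermitian logarithm of $e^{irx}e^{iy}$ from already reaching norm $\pi$ for some $r<1$. The perturbation-and-absorb device avoids this, and the only genuinely delicate point is the quantitative spectral estimate showing that a small nudge of $e^{iy}$ creates no eigenvalue $-1$ — which is exactly where the hypothesis $\|y\|<\pi$ enters essentially.
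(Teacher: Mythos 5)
Your argument is correct, and it takes a genuinely different route from the paper's. The paper shrinks $z$ by \emph{scaling}: it sets $z_s = sz$ with $s \to 1^-$, observes that $y_s := -i\log(e^{-ix}e^{isz})$ is well defined and Hermitian with $\|y_s\| < \pi$ for $s$ near $1$ (because $\|e^{-ix}e^{iz} - 1\| = \|e^{iy}-1\| < 2$), applies Proposition~\ref{triad} once to $e^{ix}e^{iy_s} = e^{isz}$, and then lets $s \to 1^-$, relying on the convergence $\|y_s - y\|_{\phi} \to 0$ of the $\IF$-valued logarithm --- a fact the paper asserts is ``easy to check'' but does not spell out. Your proof instead shrinks $z$ by a \emph{commuting finite-rank spectral truncation}: you nudge only the $\pm\pi$ eigenvalues inward by $\epsilon\pi$, which produces a completely explicit error term $2\epsilon\pi\|q\|_{\phi}$ with $q$ a fixed finite-rank projection, and you absorb the perturbation into the $y$-factor via two applications of Proposition~\ref{triad}, thereby sidestepping any appeal to continuity of $\log$ in the symmetric norm. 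The one delicate step --- that multiplying $e^{iy}$ by the unitary $e^{i\tilde y}$ does not push an eigenvalue onto $-1$ --- you justify correctly by the spectral-variation bound for normal operators together with $\mathrm{dist}\bigl(-1,\sigma(e^{iy})\bigr) \ge 2\cos(\|y\|/2) > 0$, which is exactly where $\|y\| < \pi$ is used. One small correction to your closing remark: the paper does use scaling, but it scales $z$ rather than $x$, and that variant is unproblematic because $\|sz\| < \pi$ for $s<1$ is automatic; the objection you raise applies to scaling $x$, which the paper does not attempt. On balance your approach is more self-contained and quantitative, at the modest cost of invoking Proposition~\ref{triad} twice instead of once.
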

\begin{proof}
Consider the triangle spanned by $1,e^{ix},e^{iz}$ in $\u$. Note that there exists $\epsilon>0$ such that
$$
\|e^{-ix}e^{isz}-1\|<2
$$
if $1-\epsilon<s\le 1$, due to the fact that $\|e^{iy}-1\|<2$. Thus $y_s\in \kh_h$ defined as $iy_s=\log(e^{-ix}e^{isz})$ is well defined in the same interval, and depends smoothly on $s$; moreover it is easy to check that $y_s\in {\cal I}_{\phi}$, and then $\|y_s-y\|_{\phi}\to 0$ when $s\to 1^-$. By the previous proposition, since $\|sz\|=s\|z\|<\pi$ for $s<1$, we obtain
$$
\|sz\|_{\phi}\le \|x\|_{\phi}+\|y_s\|_{\phi}
$$
for $s$ close to $1$. Letting $s\to 1^-$ gives the claim.
\end{proof}

\subsection{Geodesic segments are short}

Recall that we use the term \textit{segment} for the curves $ue^{itz}$ in $\ufi$, with $z\in {\cal I}_{\phi}$ Hermitian. A \textit{polygonal path} is a continuous curve that consists piecewise of segments.

\begin{prop}\label{poli}
If $z\in {\cal I}_{\phi}$ is Hermitian, $\|z\|<\pi$, and $P:[0,1]\to \ufi$ is a polygonal path joining $1$ to $e^{iz}$, then $L_{\phi}(P)\ge \|z\|_{\phi}$.
\end{prop}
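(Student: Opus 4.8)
The plan is to reduce an arbitrary polygonal path $P$ joining $1$ to $e^{iz}$ to a finite application of the triangle inequality in Corollary \ref{corti} (or Proposition \ref{triad}), using that length is additive along the pieces. Write $P$ as a concatenation of segments $\sigma_j(t)=u_{j-1}e^{itx_j}$ for $j=1,\dots,N$, where $u_0=1$, $u_N=e^{iz}$, each $x_j\in\IF$ is Hermitian, and $u_j=u_{j-1}e^{ix_j}$; by unitary invariance of $\nf$ the length of the $j$-th piece is $L_\phi(\sigma_j)=\|x_j\|_\phi$, so $L_\phi(P)=\sum_{j=1}^N\|x_j\|_\phi$. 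Replacing each $x_j$ by a logarithm of norm $\le\pi$ costs nothing and, by the monotonicity built into Lemma \ref{exponentes} together with Remark \ref{maschico}, can only decrease the $\phi$-norm; so we may assume $\|x_j\|\le\pi$. The goal is then $\sum_j\|x_j\|_\phi\ge\|z\|_\phi$ whenever $\|z\|<\pi$.

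The natural route is induction on $N$. Set $v_k=e^{ix_1}\cdots e^{ix_k}$, so $v_N=e^{iz}$; we want to control the geodesic distance from $1$ to $v_k$ by $\sum_{j\le k}\|x_j\|_\phi$. Concretely, let $w_k\in\IF$ be a Hermitian logarithm of $v_k$ with $\|w_k\|\le\pi$ (available by Remark \ref{loga}); then $e^{iw_{k-1}}e^{ix_k}=e^{iw_k}$, and if both $\|x_k\|<\pi$ and $\|w_k\|\le\pi$ hold, Corollary \ref{corti} gives $\|w_k\|_\phi\le\|w_{k-1}\|_\phi+\|x_k\|_\phi$. Chaining these inequalities from $k=1$ to $N$ and using $w_N=z$ (this is where $\|z\|<\pi$ forces, via Lemma \ref{exponentes}, that the chosen logarithm of $v_N$ is exactly $z$) yields $\|z\|_\phi\le\sum_{j=1}^N\|x_j\|_\phi=L_\phi(P)$, which is the claim.

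The main obstacle is that Corollary \ref{corti} requires the \emph{middle} exponent to have norm strictly less than $\pi$ and the \emph{resulting} exponent to have norm $\le\pi$ — hypotheses that need not be met at intermediate vertices: a polygonal path can wander far from $1$, so some $\|w_k\|$ could equal $\pi$ or the relevant logarithm could fail to be the "short" one, and one does not control $\|w_{k-1}-w_k\|$ a priori. I expect to handle this by a perturbation/continuity argument in the spirit of the proof of Corollary \ref{corti}: rescale, i.e. consider the sub-polygonal paths obtained by running each segment only up to parameter $s<1$ near the troublesome vertices, apply the inequality where the strict bounds hold, and pass to the limit $s\to1^-$ using that the logarithms depend continuously (in $\nf$) on the endpoints as long as we stay in the region $\|\cdot\|<2$. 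If a vertex $v_k$ has $\|v_k-1\|$ close to $2$, one can instead insert an auxiliary short segment to split the long segment into two of norm $<\pi$ without increasing total length, thereby reducing to the generic case. Once every middle exponent is arranged to have norm $<\pi$, the telescoping argument above goes through and delivers $L_\phi(P)\ge\|z\|_\phi$.
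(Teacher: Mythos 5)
Your proposal follows essentially the same route as the paper: reduce to the two-segment case via induction (the paper says "iterating the argument") and apply Corollary \ref{corti} at each step. The edge-case worries you raise are in fact milder than you fear, because Corollary \ref{corti} places no norm bound on the first exponent $x$ (only $\|y\|<\pi$ for the middle one and $\|z\|\le\pi$ for the result), so the intermediate logarithms $w_k$ with $\|w_k\|\le\pi$ are automatically admissible; only a segment exponent $x_j$ with $\|x_j\|=\pi$ needs the length-preserving split you describe.
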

\begin{proof}
Iterating the argument, it suffices to prove the assertion for a path that consists of two segments, that is, assume $e^{iz}=e^{ix}e^{iy}$ and $P$ consists of $\mu_1(t)=e^{itx}$ followed by $\mu_2(t)=e^{ix}e^{ity}$, with $\|y\|<\pi$. Now $L_{\phi}(P)\ge \|z\|_{\phi}$ is the claim of Corollary \ref{corti}.
\end{proof}

Now we approximate a smooth path with a polygonal path.

\begin{lem}
Let $\gamma:[0,1]\to\ufi$ be piecewise smooth. Then for any $\epsilon>0$ there is a polygonal path $P_{\epsilon}:[0,1]\to\ufi$ such that for any $t\in [0,1]$,
$$
\|P_{\epsilon}^*(t)\dot{P}_{\epsilon}(t)-\gamma^*(t)\dot{\gamma}(t)\|_{\phi}<\epsilon.
$$
\end{lem}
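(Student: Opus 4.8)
The plan is to build $P_\epsilon$ by ``integrating'' a step-function approximation of the logarithmic velocity $v(t):=\gamma^*(t)\dot\gamma(t)$ of $\gamma$. The first step is to check that $v$ is a well-defined, piecewise $\|\cdot\|_\phi$-continuous curve with values among the anti-Hermitian elements of $\IF$: since $\dot\gamma$ takes values in $\IF$ and is continuous in the $\phi$-norm (this is what ``smooth'' means for a curve in $\ufi$), and $\|\gamma(t)\|=1$, the symmetric-norm estimate $\|ab\|_\phi\le\|a\|\,\|b\|_\phi$ together with the bound $\|v(t)-v(s)\|_\phi\le\|\dot\gamma(t)-\dot\gamma(s)\|_\phi+\|\gamma(t)-\gamma(s)\|\,\|\dot\gamma(s)\|_\phi$ shows that $v$ is continuous, hence uniformly continuous, in $\|\cdot\|_\phi$ on each of the finitely many subintervals where $\gamma$ is $C^1$.

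Given $\epsilon>0$, I would then fix a partition $0=t_0<t_1<\dots<t_N=1$ refining the finitely many break-points of $\gamma$ and fine enough that $\|v(t)-w_j\|_\phi<\epsilon$ for every $t\in(t_{j-1},t_j)$, where $w_j$ is the value of $v$ at $t_{j-1}$ taken along the piece $[t_{j-1},t_j]$ (the appropriate one-sided limit); this is possible by the uniform continuity just established. Next I would define $P_\epsilon$ recursively by $P_\epsilon(t_0)=\gamma(0)$ and $P_\epsilon(t)=P_\epsilon(t_{j-1})\,e^{(t-t_{j-1})w_j}$ on $[t_{j-1},t_j]$. Writing $w_j=iz_j$ with $z_j\in\IF$ Hermitian, each piece is an affine reparametrization of a geodesic segment $u\,e^{i s z_j}$; the pieces glue continuously at the $t_j$, so $P_\epsilon$ is a polygonal path, and an induction using that $\IF$ is an ideal and $\ufi$ a group shows $P_\epsilon(t)\in\ufi$ for all $t$. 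Since $e^{(t-t_{j-1})w_j}$ commutes with $w_j$, one computes $P_\epsilon^*(t)\dot P_\epsilon(t)=w_j$ on $(t_{j-1},t_j)$, whence $\|P_\epsilon^*(t)\dot P_\epsilon(t)-\gamma^*(t)\dot\gamma(t)\|_\phi<\epsilon$ on each open subinterval; at the corners one argues with the one-sided derivatives (each equal to some $w_j$), so starting from $\epsilon/2$ one gets the stated inequality for every $t\in[0,1]$. As a byproduct $L_\phi(P_\epsilon)\le L_\phi(\gamma)+\epsilon$, which is the form in which the lemma gets used.

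The only genuinely delicate point is the uniform $\|\cdot\|_\phi$-continuity of the logarithmic velocity $t\mapsto\gamma^*(t)\dot\gamma(t)$; once that is in hand, everything else (continuity of $P_\epsilon$, that it remains in $\ufi$, the velocity identity on each segment) is routine bookkeeping. One caveat worth recording: this construction gives $P_\epsilon(0)=\gamma(0)$ but only $P_\epsilon(1)$ close to $\gamma(1)$ in $\|\cdot\|_\phi$; if an exact endpoint match is required, one can instead take on $[t_{j-1},t_j]$ the geodesic segment joining $\gamma(t_{j-1})$ to $\gamma(t_j)$, at the price of having to show $(t_j-t_{j-1})^{-1}\log\!\big(\gamma(t_{j-1})^*\gamma(t_j)\big)\to v(t_{j-1})$ in $\|\cdot\|_\phi$ uniformly as the mesh shrinks, which is where differentiability of $\gamma$ in the $\phi$-norm (not merely $\phi$-continuity of $\dot\gamma$) would be invoked.
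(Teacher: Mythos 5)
Your first construction correctly proves the lemma as literally stated, but observe that in the proof of Theorem \ref{distancia} the lemma is invoked to produce a polygonal path joining the \emph{same} endpoints $1$ and $e^{iz}$ as $\gamma$; your free-endpoint $P_\epsilon$ has $P_\epsilon(1)\neq\gamma(1)$ in general, so the application would fail. You flag this yourself, and the ``fix'' in your caveat is precisely the paper's actual construction: one sets $z_k=\frac{1}{t_{k+1}-t_k}\log\!\bigl(\gamma^*(t_k)\gamma(t_{k+1})\bigr)$ (well-defined once the mesh is fine enough that $\|\gamma(t_k)-\gamma(t_{k+1})\|<2$) and $P_\epsilon(t)=\gamma(t_k)e^{(t-t_k)z_k}$ on $[t_k,t_{k+1}]$, which then agrees with $\gamma$ at every partition point, in particular at $t=0,1$. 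Your worry about whether $(t_{k+1}-t_k)^{-1}\log(\gamma^*(t_k)\gamma(t_{k+1}))\to\gamma^*(t_k)\dot\gamma(t_k)$ uniformly in the mesh is legitimate, but it is not a gap: the paper notes that $g_t(h)=h^{-1}\log(\gamma^*(t)\gamma(t+h))$ is analytic near $h=0$ and extends continuously with $g_t(0)=\gamma^*(t)\dot\gamma(t)$, which is exactly $\phi$-norm differentiability of $\gamma$; since ``piecewise smooth'' here means piecewise $C^1$ for the $\phi$-norm, joint $\phi$-continuity of $(t,h)\mapsto g_t(h)$ on the compact domain gives the uniformity. So the ingredient you thought might be missing is already contained in the smoothness hypothesis, and your caveat describes the paper's proof verbatim.
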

\begin{proof}
We may as well assume that $\gamma$ is smooth in $[0,1]$. Recall that $\gamma,\dot{\gamma}$ are continuous in the uniform norm. Let $\epsilon>0$, and choose a partition $0=t_0<t_1<\cdots<t_n=1$ of the interval $[0,1]$ such that, for any $k=0,1,\cdots,n$,
$$
\|\gamma(t)-\gamma(s)\|_{\phi}<2\quad\mbox{ and }\quad\|\gamma^*(t)\dot{\gamma}(t)-\gamma^*(s)\dot{\gamma}(s)\|_{\phi}<\frac{\epsilon}{2}
$$
if $s,t\in [t_k,t_{k+1}]$. Then, by Remark \ref{compare} $\|\gamma(t)-\gamma(s)\|<2$ if $s,t\in [t_k,t_{k+1}]$. Let $\log$ denote the principal branch of the analytic logarithm in $\bh$. According to Remark \ref{loga}, the unique anti-Hermitian logarithm of $\gamma^*(t_k)\gamma(t_{k+1})$ in $\bh$ such that $\|z\|<\pi$ is in fact an element of $\IF$, due to the fact that
$$
\|\gamma^*(t_k)\gamma(t_{k+1})-1\|<2.
$$
Then put
$$
z_k=\frac{\log(\gamma^*(t_k)\gamma(t_{k+1}))}{t_{k+1}-t_k}\in\IF.
$$
Now note that, for any fixed $t\in [0,1]$, the map $g:h\mapsto \frac1h \log(\gamma^*(t)\gamma(t+h))$, is well-defined and analytic, for sufficiently small $h$. Moreover, if $h\to 0$, then
$$
g(h)\to d\log_1 \gamma^*(t)\dot{\gamma}(t)=\gamma^*(t)\dot{\gamma}(t).
$$
Then, taking a refinement of the partition if necessary, we can also assume that
$$
\|z_k-\gamma^*(t_k)\dot{\gamma}(t_k)\|_{\phi}<\frac{\epsilon}{2}
$$
for any $k=0,1,2\cdots, n$. Consider the map $P_{\epsilon}:[0,1]\to\u$ which is defined as
$$
P_{\epsilon}(t)=\gamma(t_k)e^{(t-t_k)z_k}\mbox{ for }t\in [t_k,t_{k+1}].
$$
Then $P_{\epsilon}$ is certainly a polygonal path, and it is straightforward to see that verifies the claim of the lemma.
\end{proof}

\begin{teo}\label{distancia}
Let $u,v\in \ufi$ and $v=ue^{iz}$, with $\|z\|\le \pi$, $z\in \IF$ and Hermitian. Then, the curve $\delta(t)=ue^{itz}$ is shorter than any other piecewise smooth curve $\gamma$ in $\ufi$ joining $u$ to $v$, when we measure them with the norm $\nf$. In particular, $d_{\phi}(u,v)=\|z\|_{\phi}$.
\end{teo}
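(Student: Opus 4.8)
The plan is to reduce to the case $u=1$ and then to establish the lower bound $L_{\phi}(\gamma)\ge \|z\|_{\phi}$ for an arbitrary competitor $\gamma$; combined with the trivial upper bound this gives both assertions. Since the metric is left-invariant (if $\gamma$ joins $u$ to $v=ue^{iz}$, then $u^*\gamma$ joins $1$ to $e^{iz}$ and $(u^*\gamma)^*(u^*\dot\gamma)=\gamma^*\dot\gamma$, so lengths coincide), there is no loss in assuming $u=1$, $v=e^{iz}$. Writing $\delta(t)=e^{itz}$ one computes $\delta^*(t)\dot\delta(t)=iz$, hence $\|\dot\delta\|_{\phi}=\|z\|_{\phi}$ and $L_{\phi}(\delta)=\|z\|_{\phi}$; in particular $d_{\phi}(1,e^{iz})\le \|z\|_{\phi}$. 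Thus it only remains to prove $L_{\phi}(\gamma)\ge \|z\|_{\phi}$ for every piecewise smooth $\gamma$ joining $1$ to $e^{iz}$ in $\ufi$.

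First I would treat the case $\|z\|<\pi$. Given such a $\gamma$ and $\epsilon>0$, the preceding approximation Lemma produces a polygonal path $P_{\epsilon}$ with $\|P_{\epsilon}^*(t)\dot P_{\epsilon}(t)-\gamma^*(t)\dot\gamma(t)\|_{\phi}<\epsilon$ for all $t$; one checks from the construction $P_{\epsilon}(t)=\gamma(t_k)e^{(t-t_k)z_k}$ that $P_{\epsilon}$ is continuous and shares the endpoints $1$ and $e^{iz}$ with $\gamma$ (the last piece contributes $\gamma(t_{n-1})\gamma^*(t_{n-1})\gamma(t_n)=e^{iz}$). By unitary invariance of $\|\cdot\|_{\phi}$,
$$
L_{\phi}(P_{\epsilon})=\int_0^1 \|P_{\epsilon}^*\dot P_{\epsilon}\|_{\phi}\le \int_0^1\big(\|\gamma^*\dot\gamma\|_{\phi}+\epsilon\big)=L_{\phi}(\gamma)+\epsilon,
$$
while Proposition \ref{poli} gives $L_{\phi}(P_{\epsilon})\ge \|z\|_{\phi}$. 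Hence $L_{\phi}(\gamma)\ge \|z\|_{\phi}-\epsilon$, and letting $\epsilon\to 0$ settles this case.

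The main obstacle is the boundary case $\|z\|=\pi$, where the Hermitian logarithm of $e^{iz}$ need not be unique and where both Proposition \ref{poli} and the approximation Lemma were stated with a \emph{strict} bound on the norm; I would circumvent it by a scaling argument. Fix $0<s<1$ and consider the single segment $\sigma_s(t)=e^{i(1+t(s-1))z}$, which lies in $\ufi$, joins $e^{iz}$ to $e^{isz}$, and satisfies $\sigma_s^*\dot\sigma_s=i(s-1)z$, so $L_{\phi}(\sigma_s)=(1-s)\|z\|_{\phi}$. Given a piecewise smooth $\gamma$ from $1$ to $e^{iz}$, the concatenation $\gamma\ast\sigma_s$ is a piecewise smooth path in $\ufi$ from $1$ to $e^{i(sz)}$ with $sz\in\IF$ Hermitian and $\|sz\|=s\pi<\pi$; applying the case already proved,
$$
L_{\phi}(\gamma)+(1-s)\|z\|_{\phi}=L_{\phi}(\gamma\ast\sigma_s)\ge \|sz\|_{\phi}=s\|z\|_{\phi}.
$$
Therefore $L_{\phi}(\gamma)\ge (2s-1)\|z\|_{\phi}$ for every $s<1$, and letting $s\to 1^-$ yields $L_{\phi}(\gamma)\ge \|z\|_{\phi}$. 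In all cases we obtain $d_{\phi}(1,e^{iz})\ge \|z\|_{\phi}$, which together with the reverse inequality noted above shows that $\delta$ is shorter than any piecewise smooth competitor and that $d_{\phi}(u,v)=\|z\|_{\phi}$.
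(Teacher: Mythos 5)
Your proof is correct. For the case $\|z\|<\pi$ it follows the paper's argument essentially verbatim: approximate $\gamma$ by a polygonal path $P_{\epsilon}$ with the same endpoints via the preceding Lemma, apply Proposition~\ref{poli} to get $L_{\phi}(P_{\epsilon})\ge\|z\|_{\phi}$, and let $\epsilon\to 0$. For the boundary case $\|z\|=\pi$, however, your scaling argument is a genuine (and slightly cleaner) variant. The paper truncates $\gamma$ at parameter $s$, appends a geodesic segment $\delta_s$ joining $\gamma(s)$ to $e^{isz}$, and then must check separately that both $L_{\phi}(\delta_s)\to 0$ and $\int_s^1\|\dot\gamma\|_{\phi}\to 0$ as $s\to 1^-$ before passing to the limit. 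You instead keep all of $\gamma$ and append the explicit segment $\sigma_s(t)=e^{i(1+t(s-1))z}$ from $e^{iz}$ to $e^{isz}$, whose length is computed exactly as $(1-s)\|z\|_{\phi}$; the case already proved then gives $L_{\phi}(\gamma)+(1-s)\|z\|_{\phi}\ge s\|z\|_{\phi}$, and letting $s\to 1^-$ is immediate, with no auxiliary limits to verify. Both routes are valid; yours sidesteps the two small continuity estimates the paper needs.
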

\begin{proof}
It suffices to prove the theorem for $u=1$. Assume first that $\|z\|<\pi$. For given $\epsilon>0$, let $P_{\epsilon}$ be a polygonal path in $\ufi$ as in the previous lemma, joining $1$ to $e^{iz}$, such that
$$
\|\gamma^*\dot{\gamma}-P_{\epsilon}^*\dot{P}_{\epsilon}\|_{\phi}<\epsilon.
$$
Let $\delta(t)=e^{itz}$. Then by Proposition \ref{poli},
$$
\|z\|_{\phi}=L_{\phi}(\delta)\le L_{\phi}(P_{\epsilon})=\int_0^1 \|P_{\epsilon}^*\dot{P}_{\epsilon}\|_{\phi}\le \int_0^1 \|\gamma^*\dot{\gamma}-P_{\epsilon}^*\dot{P}_{\epsilon}\|_{\phi}+ L_{\phi}(\gamma)<\epsilon+L_{\phi}(\gamma),
$$
and thus $L_{\phi}(\gamma)\ge \|z\|_{\phi}$. 

If $\|z\|=\pi$, consider for $s\in(0,1)$ the path $\delta_s$ which is the geodesic segment joining $\gamma(s)$ to $e^{isz}$. Then
$$
\int_0^1 \|\dot{\gamma}\|_{\phi}=\int_0^s \|\dot{\gamma}\|_{\phi}+\int_s^1 \|\dot{\gamma}\|_{\phi}=\int_0^s \|\dot{\gamma}\|_{\phi}+L_{\phi}(\delta_s)+\int_s^1 \|\dot{\gamma}\|_{\phi}-L_{\phi}(\delta_s).
$$
Note that for $s<1$, the path $\gamma|_{[0,s]}$ followed by $\delta_s$ is a piecewise smooth curve joining $1$ to $e^{isz}$. Since $\|sz\|=s\|z\|<\pi$, by what we have just proved,
$$
\int_0^s \|\dot{\gamma}\|_{\phi}+L_{\phi}(\delta_s)\ge s\|z\|_{\phi}.
$$
Also note that, for $s$ close to $1$, $\delta_s(t)=\gamma(s)e^{itz_s}$, with $iz_s=\log(\gamma^*(s)e^{isz})$. Since $\|\gamma(s)-e^{iz}\|_{\phi}\to 0$ as $s$ approaches $1$ from the left, then $\|z_s\|_{\phi}\to 0$  as $s\to 1^-$. Then
$$
L_{\phi}(\delta_s)=\int_0^1 \|\dot{\delta_s}\|_{\phi}=\int_0^1 \|z_s\|_{\phi}
$$
shows that $L_{\phi}(\delta_s)\to 0$ as $s\to 1^-$. Now from
$$
L_{\phi}(\gamma)=\int_0^1 \|\dot{\gamma}\|_{\phi}\ge s\|z\|_{\phi}+\int_s^1 \|\dot{\gamma}\|_{\phi}-L_{\phi}(\delta_s)
$$
we obtain $L_{\phi}(\gamma)\ge \|z\|_{\phi}$ letting $s\to 1^-$.
\end{proof}

\subsection{Relation with the linear metric}

Here we establish that the rectifiable distance is in fact uniformly equivalent to the linear metric in the unitary group $\ufi$.

\begin{prop}\label{comple}
Let $u,v\in \ufi$. Then
$$
\sqrt{1-\frac{\pi^2}{12}}\, d_{\phi}(u,v)\le \|u-v\|_{\phi}\le d_{\phi}(u,v).
$$
In particular, the function $d_{\phi}$ is a distance in $\ufi$, equivalent to the linear distance, and if the ideal $\IF$ is complete, then the metric space $(\ufi,d_{\phi})$ is complete.
\end{prop}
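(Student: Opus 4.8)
The plan is to establish the two inequalities separately and then read off the three stated consequences. For the upper bound $\|u-v\|_\phi \le d_\phi(u,v)$, I would argue that the symmetric norm of the increment is controlled by the rectifiable length of \emph{any} piecewise smooth curve $\gamma$ joining $u$ to $v$: writing $u-v = \gamma(0)-\gamma(1) = -\int_0^1 \dot\gamma(t)\,dt$, the triangle inequality for the Bochner-type integral in the Banach space $(\IF,\nf)$ (legitimate since $\dot\gamma$ is $\nf$-continuous by hypothesis and by Remark~\ref{compare}) gives $\|u-v\|_\phi \le \int_0^1 \|\dot\gamma(t)\|_\phi\,dt = L_\phi(\gamma)$. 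Taking the infimum over all such $\gamma$ yields $\|u-v\|_\phi \le d_\phi(u,v)$. This half is routine.

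For the lower bound, I would use Theorem~\ref{distancia} together with Remark~\ref{loga}. Given $u,v\in\ufi$, there is a Hermitian $z\in\IF$ with $\|z\|\le\pi$ and $v=ue^{iz}$, and by Theorem~\ref{distancia} we have $d_\phi(u,v)=\|z\|_\phi$. It therefore suffices to show
$$
\sqrt{1-\tfrac{\pi^2}{12}}\,\|z\|_\phi \le \|u-v\|_\phi = \|1-e^{iz}\|_\phi.
$$
Here I would invoke the spectral/monotonicity machinery: writing $z=\sum_k t_k p_k$ with $|t_k|\le\pi$ as in Remark~\ref{logaritmo}, one has $1-e^{iz}=\sum_k(1-e^{it_k})p_k$, and the elementary inequality~(\ref{comparando}), namely $|e^{it}-1|\ge |t|\sqrt{1-t^2/12}\ge |t|\sqrt{1-\pi^2/12}$ for $t\in[-\pi,\pi]$, gives the pointwise (spectral) bound $|1-e^{iz}| \ge \sqrt{1-\pi^2/12}\,|z|$ as positive operators. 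By Remark~\ref{maschico} (monotonicity of symmetric norms on the positive cone) this passes to the symmetric norm: $\|1-e^{iz}\|_\phi \ge \sqrt{1-\pi^2/12}\,\||z|\|_\phi = \sqrt{1-\pi^2/12}\,\|z\|_\phi$, using gauge invariance. This establishes the left-hand inequality.

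Finally I would collect the consequences. From $\|u-v\|_\phi \le d_\phi(u,v)$ and the fact that $\nf$ is a genuine norm (so $\|u-v\|_\phi=0 \Rightarrow u=v$) we get that $d_\phi$ separates points; symmetry and the triangle inequality for $d_\phi$ are immediate from the definition as an infimum of lengths over curves (which can be reversed and concatenated), so $d_\phi$ is a distance. The two-sided bound shows $d_\phi$ and the linear metric $d(u,v)=\|u-v\|_\phi$ induce the same uniformity, hence are equivalent. For completeness: if $(\IF,\nf)$ is complete, then a $d_\phi$-Cauchy sequence in $\ufi$ is, by the equivalence, $\nf$-Cauchy, hence $u_n-1$ converges in $\IF$ to some $x$; set $u=1+x$. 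One checks $u\in\u$ (the limit of unitaries in the uniform norm, which $\nf$ dominates by Remark~\ref{compare}, is unitary) and $u-1=x\in\IF$, so $u\in\ufi$ and $u_n\to u$ in $d_\phi$. The only mildly delicate point is the operator inequality $|1-e^{iz}|\ge \sqrt{1-\pi^2/12}\,|z|$ and its transfer to $\nf$ via Remark~\ref{maschico}; I expect that to be the main (though still modest) obstacle, everything else being bookkeeping.
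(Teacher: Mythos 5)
Your proof is correct and follows essentially the same route as the paper: the upper bound by integrating $\dot\gamma$, the lower bound via the scalar inequality~(\ref{comparando}) applied spectrally to $|1-e^{iz}|\ge\sqrt{1-\pi^2/12}\,|z|$ and Remark~\ref{maschico}, and completeness by passing through the linear metric. The only cosmetic difference is that you invoke the full strength of Theorem~\ref{distancia} to get $d_\phi(u,v)=\|z\|_\phi$, whereas the paper only needs the trivial half $d_\phi(u,v)\le\|z\|_\phi$ (the segment $e^{itz}$ has length $\|z\|_\phi$), which keeps the proposition independent of the minimality theorem.
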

\begin{proof}
By the invariance of the metric, it suffices to check the inequalities of the assertion for $v=1$. From equation (\ref{comparando}) in Remark \ref{logaritmo}, it clearly follows that if $u=e^{iz}$, with $\|z\|\le \pi$ and $z\in \IF$ Hermitian, then the inequality on the left holds. The other inequality is trivial. 

As for the completeness, it suffices to check that $\ufi$ is complete with the linear metric induced by the norm $\nf$. This follows from the fact that if $\{u_n\}\subset \ufi$ is a Cauchy sequence, then the sequence $\{u_n-1\}$ is a Cauchy sequence in $\IF$, and by assumption it converges to an element $z\in\IF$. Let $u=z+1$, we claim that $u\in \u$. Note that $\|u_n-u\|=\|(u_n-1)-z\| \le \|(u_n-1)-z\|_{\phi}\to 0$, hence

$$
\|uu^*-1\|= \lim_n \|u_nu_n^*-1\|=0,
$$
thus $uu^*=1$. Likewise, $u^*u=1$.
\end{proof}

\section{Uniqueness of short geodesics}\label{unicas}

In this section we prove uniqueness of short paths, assuming the symmetric norm is strictly convex.

\subsection{Dual norms and dual ideals}

\begin{defi}
Let $Tr$ stand for the infinite trace of $\bh$. Let $\nfp$ be the dual norm of $\nf$, which is computed for $x\in \kh$ as
$$
\|x\|_{\phi'}=\sup \{ |Tr(xy)| : \|y\|_{\phi}\le 1\}.
$$
This formula defines a norm $\nfp:\kh\to \mathbb R\cup \{\infty\}$, all the properties of a norm are trivially verified with the exception that $\|x\|_{\phi'}=0$ implies $x=0$. But this follows from the singular values expansion
$$
x=u|x|=\sum_j \alpha_j e_j\otimes (ue_j).
$$
By picking $y=(e_n\otimes e_n)u^*$ (see Remark \ref{compare}) one obtains $\alpha_n=0$ from $\|x\|_{\phi'}=0$, thus $x=0$. 
\end{defi} 

\begin{rem}\label{directly}Directly from the definition it follows that
$$
|Tr(xy)|\le \|x\|_{\phi}\|y\|_{\phi'}.
$$
Let $\|\cdot\|_{\phi''}=\|\cdot\|_{(\phi')'}$, and ${\cal I}_{\phi''}$ the corresponding ideal. Then it also follows directly from the definitions that
$$
\|x\|_{\phi''}\le \|x\|_{\phi}.
$$
In particular, if $x\in \IF$ then $x\in {\cal I}_{\phi''}$. Note that, by definition, $\IFp,{\cal I}_{\phi''}\subset\kh$.
\end{rem} 

\begin{rem}
Let $x\in \kh$, $x\ge 0$. Let $E:\kh\to C^*(x)$ stand for the unique conditional expectation compatible with the trace of $\bh$, which is obtained as follows: let $\{e_k\}$ be a basis of $ran(x)$, completed to a basis of $\cal H$. Let $p_k=e_k\otimes e_k$ be the induced family of projections. Then for any $y\in \kh$, 
$$
Tr(p_ky)=\sum_n \langle p_k ye_n,e_n\rangle=\langle ye_k,e_k\rangle
$$
thus $Tr(p_ky)\to 0$ and
$$
E(y)=\sum_k Tr(p_ky)p_k=\sum_k p_kyp_k,
$$
which is known as a {\em pinching operator}. It is well-known that pinchings in finite dimensions are contractive for the symmetric norms, the result extends easily to our context, a property that turns out to be quite useful to estimate the norm of an element.
\end{rem}

The following lemma collects a series of useful facts regarding symmetric norms and their duals.

\begin{lem}Let $\nf$ by a symmetric norm in $\bh$, let $\nfp$ stand for its dual norm. Then
\begin{enumerate}
\item $\nfp$ is a symmetric norm in $\kh$. 
\item Let $x\in \kh$ be positive, let $E$ stand for the induced conditional expectation. Then for any symmetric norm $\nf$,  we have that $E(y)\in \IF$ whenever $y\in  \IF$, and moreover
$$
\|E(y)\|_{\phi}\le \|y\|_{\phi}.
$$
\item Let $x,y\in \bh$, then
$$
|Tr(xy)|\le \|xy\|_1\le \|x\|_{\phi}\|y\|_{\phi'}.
$$
If $x,y\in \bh_h$ (or $x,y\in\bh_{ah}$), then $Tr(xy^*)\in\mathbb R$. Thus,
$$
-\|x\|_{\phi}\|y\|_{\phi'}\le Tr(xy^*)\le \|x\|_{\phi}\|y\|_{\phi'}.
$$
\item If $x\ge 0$, then $\|x\|_{\phi'}=\sup\{ Tr(xy):\,  y\ge 0,\, yx=xy,\, \|y\|_{\phi}\le 1\}$. 
\item $\IF={\cal I}_{\phi''}$ and $\|\cdot\|_{\phi''}=\nf$.
\item If $x\in \IF$, then $\|x\|_{\phi}=\|\,|x|\, \|_{\phi}=\sup\{ Tr(|x|y):\,  y\ge 0,\, y|x|=|x|y,\, \|y\|_{\phi'}\le 1\}$.
\end{enumerate}
\end{lem}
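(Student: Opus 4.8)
The plan is to prove the six items in the order listed, each one feeding the next, with the biduality in item~(5) as the technical heart.

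For (1), I would use the cyclicity of the trace: given $x,z\in\bh$ and any $a$ with $\|a\|_\phi\le1$, one has $Tr(xyz\,a)=Tr\bigl(y\,(zax)\bigr)$, so Remark~\ref{directly} together with the submultiplicativity of $\nf$ gives $|Tr(xyz\,a)|\le\|y\|_{\phi'}\,\|zax\|_\phi\le\|x\|\,\|z\|\,\|y\|_{\phi'}$; taking the supremum over such $a$ yields $\|xyz\|_{\phi'}\le\|x\|\,\|y\|_{\phi'}\,\|z\|$ (the remaining norm axioms were already verified above). For (2), I would first treat the truncated pinchings $E_n(y)=\sum_{k\le n}p_kyp_k+q_nyq_n$, with $q_n=1-\sum_{k\le n}p_k$: each $E_n$ is the average of the $2^n$ conjugations of $y$ by the unitaries that act as $\pm1$ on $e_1,\dots,e_n$ and as the identity elsewhere, so unitary invariance and the triangle inequality give $\|E_n(y)\|_\phi\le\|y\|_\phi$. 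Since $q_n\to0$ strongly and $y$ is compact, $\|q_ny\|\to0$, $\|yq_n\|\to0$ and $E(y)=\sum_kp_kyp_k$ is compact, hence $E_n(y)\to E(y)$ uniformly; the noncommutative Fatou lemma \cite[Theorem 2.7(d)]{simon} then gives $E(y)\in\IF$ with $\|E(y)\|_\phi\le\|y\|_\phi$.

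For (3), the reality statement follows from $\overline{Tr(xy^*)}=Tr(yx^*)=Tr(xy^*)$, valid by cyclicity whenever $x,y$ are both Hermitian or both anti-Hermitian; the bound $|Tr(xy)|\le\|xy\|_1$ is classical, and $\|xy\|_1=\sup_{\|w\|\le1}\bigl|Tr\bigl(x(yw)\bigr)\bigr|\le\sup_{\|w\|\le1}\|x\|_\phi\,\|yw\|_{\phi'}\le\|x\|_\phi\,\|y\|_{\phi'}$, using the duality formula for the trace norm together with (1) and Remark~\ref{directly} (one may first restrict $x,y$ to finite rank and pass to the limit to avoid a priori trace-class concerns). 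For (4), the inequality ``$\le$'' is immediate since $Tr(xy)=|Tr(xy)|\ge0$ for commuting positive $x,y$; for the reverse, given $a$ with $\|a\|_\phi\le1$ I would apply the conditional expectation $E$ onto the diagonal algebra of $x$ (so that $Tr(xa)=Tr\bigl(xE(a)\bigr)$ by the trace-preserving and module properties, and $\|E(a)\|_\phi\le1$ by (2)) to reduce to $a$ commuting with $x$; then a rescaling of $a$ by a scalar of modulus one makes $Tr(xa)$ real and nonnegative, which lets me replace $a$ by $(a+a^*)/2$ and assume $a=a^*$; finally $Tr(xa)=Tr(xa_+)-Tr(xa_-)\le Tr(xa_+)$, and $y:=a_+$ is positive, commutes with $x$, and has $\|y\|_\phi\le\|\,|a|\,\|_\phi=\|a\|_\phi\le1$ by Remark~\ref{maschico}. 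The supremum over $a$ then gives ``$\ge\|x\|_{\phi'}$''.

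Item~(5) is where I expect the real difficulty. One direction, $\|x\|_{\phi''}\le\|x\|_\phi$ and $\IF\subseteq{\cal I}_{\phi''}$, is Remark~\ref{directly}. For the converse, since $\nfp$ is symmetric by (1), so is $\|\cdot\|_{\phi''}$, and both are then automatically gauge invariant; so one may reduce to $x\ge0$ and write $x=\sum_j\mu_jp_j$, $x_n=\sum_{j\le n}\mu_jp_j$, $q_n=\sum_{j\le n}p_j$. The key observation is that for an operator supported on the finite block $q_n\h$ the globally defined norms $\nf$ and $\nfp$ agree with the corresponding norms of the compression, because any test operator $b$ may be replaced by $q_nbq_n$ without increasing its $\phi$-norm and without changing the relevant trace. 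Feeding this into the classical finite-dimensional biduality of symmetric gauge norms (von~Neumann) gives $\|x_n\|_\phi=\sup\{Tr(x_ny):y\ge0,\ yx_n=x_ny,\ y=q_nyq_n,\ \|y\|_{\phi'}\le1\}\le\|x_n\|_{\phi''}\le\|x\|_{\phi''}$. Since $x_n\to x$ uniformly and $\|x_n\|_\phi\uparrow\|x\|_\phi$ by monotone convergence (the Fatou property of the maximal norm), one concludes $\|x\|_\phi\le\|x\|_{\phi''}$, and also $x\in\IF$ whenever $\|x\|_{\phi''}<\infty$; alternatively this is classical and one may cite \cite{gohberg,simon}. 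The two delicate points are the compatibility of the restricted norms with the finite blocks and the monotone continuity $\|x_n\|_\phi\to\|x\|_\phi$.

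Finally, (6) comes essentially for free: by gauge invariance and (5), $\|x\|_\phi=\|\,|x|\,\|_\phi=\|\,|x|\,\|_{\phi''}$, and applying (4) to the symmetric norm $\nfp$ (legitimate since $\nfp$ is symmetric by (1)) with $|x|$ playing the role of $x$ identifies the right-hand side with $\sup\{Tr(|x|y):y\ge0,\ y|x|=|x|y,\ \|y\|_{\phi'}\le1\}$.
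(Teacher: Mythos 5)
Your proof is correct and, in its overall architecture, follows the same strategy as the paper: conditional expectations and the noncommutative Fatou lemma do the heavy lifting, and item~(5) reduces to the finite-dimensional blocks cut out by the spectral projections of $x$. That said, a few of your steps differ from the paper's in ways worth noting. In item~(1), the paper proves unitary invariance first and then invokes the Kadison--Pedersen result that every contraction is an average of unitaries; your argument via cyclicity of the trace, $Tr(xyz\,a)=Tr(y\,zax)$ plus submultiplicativity of $\nf$, is shorter and avoids Russo--Dye entirely. In item~(2), the paper cites the majorization fact that pinchings are dominated by the identity (Bhatia); you instead give the underlying proof, realizing the finite pinching as an average of $\pm1$ diagonal-unitary conjugates, and also upgrade the strong convergence $E_n(y)\to E(y)$ used in the paper to uniform convergence (legitimate since $y$ is compact). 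In item~(4), the paper reduces to $|z|$ via $|Tr(xz)|\le Tr(x|z|)$, while you reduce through the real part and then the positive part $a_+\le|a|$; both routes land on the same place. In item~(5), the paper applies Hahn--Banach to the finite block $\mathcal M_p$, represents the norming functional as $Tr(z\,\cdot)$, and asserts that $\|z\|_{\phi'}=\|\varphi\|$ ``is easy to check''; your account makes this compatibility between the global dual norm and the dual norm of the compression explicit, which is precisely the point the paper glosses over, and then invokes finite-dimensional biduality of symmetric gauge norms. Your phrasing ``monotone convergence'' for $\|x_n\|_\phi\uparrow\|x\|_\phi$ is slightly misleading (you are not given $x\in\IF$ a priori), but your parenthetical appeal to Fatou supplies the correct reasoning: from $\sup_n\|x_n\|_\phi\le\|x\|_{\phi''}$ and $x_n\to x$ one concludes both $x\in\IF$ and $\|x\|_\phi\le\|x\|_{\phi''}$.
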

\begin{proof}
\begin{enumerate}
\item  Let $x\in \kh$, $u\in\u$. Then, since $y\mapsto yu$ is an isometric isomorphism for the norm $\nf$, naming $z=yu$ we obtain
$$
\|ux\|_{\phi'}=\sup \{ |Tr(uxy)| : \|y\|_{\phi}\le 1\}=\sup \{ |Tr(xz)| : \|z\|_{\phi}\le 1\}=\|x\|_{\phi'},
$$
that is $\|ux\|_{\phi'}=\|x\|_{\phi'}$, likewise, $\|xv\|_{\phi'}=\|x\|_{\phi'}$ for any $v\in \u$. Thus $\nfp$ is an unitarily invariant norm. Now assume that $\|x\|<1$, then \cite{kadimean} there exists $u_k\in \u$, $k=1\ldots n$ such that $x=\frac1k \sum_{k=1}^n u_k$. Thus
$$
\|xy\|_{\phi'}\le\frac1k \sum_{k=1}^n \|u_k y\|_{\phi}=\|y\|_{\phi'},
$$
hence $\|\frac{tz}{\|z\|}y\|_{\phi'}\le \|y\|_{\phi'}$ for any $t\in (0,1)$, namely $t\|zy\|_{\phi'}\le \|z\|\|y\|_{\phi'}$. Letting $t\to 1^+$ gives $\|zy\|_{\phi'}\le \|z\|\|y\|_{\phi'}$, and with a similar argument one obtains the corresponding property for the right multiplication.
\item Let $x_n=\sum_{j=1}^n \alpha_j e_j\otimes e_j$ be the finite rank operators associated to the singular value decomposition of $x$, $\|x_n-x\|\to 0$ as $n\to \infty$. Let $p_j=e_j\otimes e_j\in \pm$ and $p=\sum_{j=1}^n p_j\in \pm$. Note that ${\cal M}_p=p\bh p$ which is a finite dimensional algebra. Let $E_n:{\cal M}_p\to {\cal M}_p$ stand for the pinching
$$
E_n(pyp)=\sum_{j=1}^n p_jyp.
$$
Then, since any pinching is majorized by the identity (see for instance \cite[page 97]{bhatia}), by considering the restriction of $\nf$ to ${\cal M}_p$, we obtain
$$
\|E_n(pyp)\|_{\phi}\le \|pyp\|_{\phi}\le \|y\|_{\phi}.
$$
Note that, for any $y\in \bh$, $E_n(pyp)=E_n(y)$, thus if $y\in \IF$,
$$
\sup_n\|E_n(y)\|_{\phi}\le \|y\|_{\phi}.
$$
Since $E_n(y)\to E(y)$ strongly, by the noncommutative Fatou Lemma \cite[Theorem 2.7(d)]{simon}, we obtain that $E(y)\in \IF$ and $\|E(y)\|_{\phi}\le \|y\|_{\phi}$.
\item If $x,y\in \bh$, it is well-known that $|Tr(xy)|\le Tr|xy|=\|xy\|_1$, and then
$$
\|xy\|_1=\sup\{|Tr(uxy)|:u\in {\u}\}\le \|ux\|_{\phi}\|y\|_{\phi'}=\|x\|_{\phi}\|y\|_{\phi'}.
$$
\item If $x\ge 0$, consider the conditional expectation $E$ as in the second item. Then $Tr(xy)=Tr(E(xy))=Tr(xE(y))$ since $E$ is a bi-module map  compatible with the trace. Since $E$ is contractive,
$$
\{E(y):\|y\|_{\phi'}\le 1\}\subset \{z\in C^*(x): \|z\|_{\phi'}\le 1\},
$$
hence
\begin{eqnarray}
\|x\|_{\phi'}&=&\sup\limits_{\|y\|_{\phi}\le 1} |Tr(xy)|=\sup\limits_{\|y\|_{\phi}\le 1} |Tr(xE(y))|\le \sup\limits_{\stackrel{z\in C^*(x)}{\|z\|_{\phi}\le 1}} |Tr(xz)|\nonumber\\
&\le & \sup\limits_{ \stackrel{z\in C^*(x)}{\|z\|_{\phi}\le 1} } \|zx\|_1 = \sup\limits_{ \stackrel{z\in C^*(x)}{\|z\|_{\phi}\le 1} } Tr(x|z|), \nonumber
\end{eqnarray}
where we have used that $x$ and $|z|$ commute and thus $x|z|=|xz|\ge 0$. Hence
$$
\|x\|_{\phi'}\le\sup\{ Tr(xy):\,  y\ge 0,\, yx=xy,\, \|y\|_{\phi}\le 1\},
$$
and the other inequality is obvious.
\item Let $x\in \IF$, then as we mentioned $x\in {\cal I}_{\phi''}$ and $\|x\|_{\phi''}\le \|x\|_{\phi}$ follow from the definition of dual norms. Now assume that  $x\in {\cal I}_{\phi''}$, let $x_n,p,{\cal M}_p$ be as in the proof of the second item. Let $\IF^p=({\cal M}_p,\nf)$ which is a finite dimensional algebra. By the Hahn-Banach theorem, there exists $\varphi\in (\IF^p)^{\ast}$ (the dual space) such that $\|\varphi\|=1$ and $\varphi(x_n)=\|x_n\|_{\phi}$. Being a finite dimensional algebra, there exists $z\in {\cal M}_p$ such that $\varphi(\cdot)=Tr(z\cdot)$. It is easy to check (using the definition of dual norms) that
$$
\|z\|_{\phi'}=\|\varphi\|=1.
$$
Thus
$$
\|x_n\|_{\phi}=\varphi(x_n)=Tr(x_nz)\le \|x_n\|_{\phi''}=\|pxp\|_{\phi''}
$$
by definition of the second dual norm. Hence
$$
\|x_n\|_{\phi}\le \|pxp\|_{\phi''}\le \|x\|_{\phi''}<\infty.
$$
By the noncommutative Fatou lemma, since $x_n\to x$ in the strong operator topology, $x\in {\cal I}_{\phi}$ and
$$
\|x\|_{\phi}\le \sup_n\|x_n\|_{\varphi}\le \|x\|_{\phi''}.
$$
\item This assertion follows directly from the two previous items.
\end{enumerate}
\end{proof}

Let $\IF^{\ast}$ stand for the dual space of the normed ideal $\IF$.

\begin{coro}\label{normi}
Let $x^*=x\in \IF$. Then there exist a net $\{\xi_i\}\subset  \IFp$ such that $\|\xi_i\|_{\phi'}\le 1$, $\xi_i^*=\xi_i$, $\xi_i x=x\xi_i$, and a functional $\eta_x\in \IF^{\ast}$ such that $\eta_i=Tr(\xi_i \cdot)\to \eta_x$ in the $\omega^*$-topology, and $\eta_x(x)=\|x\|_{\phi}$.
\end{coro}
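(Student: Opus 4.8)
The idea is to realize $\|x\|_{\phi}$ as a limit of trace pairings $Tr(x\,\xi_i)$ where $\{\xi_i\}$ is a net in the $\nfp$-unit ball that can be chosen Hermitian and commuting with $x$, and then to produce $\eta_x$ as an $\omega^*$-cluster point of the functionals $\eta_i=Tr(\xi_i\,\cdot\,)$, using that the unit ball of $\IF^{\ast}$ is $\omega^*$-compact.

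First I would invoke the last item of the preceding Lemma: since $x=x^{*}\in\IF$,
$$
\|x\|_{\phi}=\|\,|x|\,\|_{\phi}=\sup\{\,Tr(|x|y)\ :\ y\ge 0,\ y|x|=|x|y,\ \|y\|_{\phi'}\le 1\,\},
$$
so we may fix a net $\{y_i\}$ of such operators with $Tr(|x|y_i)\to\|x\|_{\phi}$. These $y_i$ commute with $|x|$ but not necessarily with $x$, and repairing this is the heart of the argument. To do it, choose an orthonormal basis $\{e_k\}$ of $\h$ that simultaneously diagonalizes $x$ (hence also $|x|$) --- possible because $x$ is compact Hermitian --- put $p_k=e_k\otimes e_k$, and let $E(\,\cdot\,)=\sum_k p_k(\,\cdot\,)p_k$ be the corresponding pinching, which is the conditional expectation of the Remark above associated to $|x|$ and this eigenbasis. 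Since $\nfp$ is symmetric (item (1) of the Lemma), item (2) applies to it and gives that $E$ maps $\IFp$ into itself with $\|E(y)\|_{\phi'}\le\|y\|_{\phi'}$. Replacing each $y_i$ by $E(y_i)$, we may thus assume $y_i$ is diagonal in the basis $\{e_k\}$; in particular $y_i\ge 0$, $\|y_i\|_{\phi'}\le 1$, and $y_i$ commutes with $x$. Moreover $|x|\in\IF$ and $y_i\in\IFp$, so $|x|y_i$ is trace class by item (3), and since $E$ is trace preserving with $|x|$ in its range, $Tr(|x|E(y_i))=Tr(E(|x|y_i))=Tr(|x|y_i)$; hence the convergence $Tr(|x|y_i)\to\|x\|_{\phi}$ survives the replacement.

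Next, writing $xe_k=\lambda_k e_k$, let $v=\sum_{\lambda_k\neq 0}\frac{\lambda_k}{|\lambda_k|}\,p_k$ be the (Hermitian) sign of $x$, so that $\|v\|\le 1$, $v$ is diagonal in $\{e_k\}$, and $vx=xv=|x|$. Put $\xi_i:=v\,y_i$. As a product of two commuting Hermitians, $\xi_i$ is Hermitian and commutes with $x$; $\xi_i\in\IFp$ since $\IFp$ is an ideal; $\|\xi_i\|_{\phi'}\le\|v\|\,\|y_i\|_{\phi'}\le 1$; and $Tr(x\xi_i)=Tr(xv\,y_i)=Tr(|x|y_i)\to\|x\|_{\phi}$, while $Tr(x\xi_i)\le\|x\|_{\phi}\|\xi_i\|_{\phi'}\le\|x\|_{\phi}$ by item (3). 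Finally put $\eta_i:=Tr(\xi_i\,\cdot\,)$; by item (3), $|\eta_i(y)|\le\|\xi_i\|_{\phi'}\|y\|_{\phi}\le\|y\|_{\phi}$ for $y\in\IF$, so $\{\eta_i\}$ lies in the closed unit ball of $\IF^{\ast}$, which is $\omega^*$-compact by Banach--Alaoglu. A subnet $\omega^*$-converges to some $\eta_x\in\IF^{\ast}$; passing to the corresponding subnet of $\{\xi_i\}$ and relabelling, we get $\eta_i\to\eta_x$ in the $\omega^*$-topology and $\eta_x(x)=\lim_i Tr(\xi_i x)=\|x\|_{\phi}$, which is the assertion.

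The only genuinely substantive step is the use of the pinching $E$: item (6) produces operators commuting merely with $|x|$, and they must be promoted to operators commuting with $x$ without increasing the $\nfp$-norm and without altering the pairing with $|x|$ --- exactly what a trace preserving, $\nfp$-contractive pinching onto a common eigenbasis of $x$ provides. The rest (the identity $vx=|x|$ and the Banach--Alaoglu extraction of $\eta_x$) is routine.
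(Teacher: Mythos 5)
Your proof is correct, and it follows the same overall skeleton as the paper's: start from item (6) of the lemma to get a maximizing net $\{y_i\}$ of positive contractions in $\IFp$ commuting with $|x|$, manufacture from it Hermitian $\xi_i$ commuting with $x$ with the same trace pairing, and finish with Banach--Alaoglu. The one place you diverge is in how you repair the gap between ``commutes with $|x|$'' and ``commutes with $x$.'' The paper takes the polar decomposition $x=u|x|$ with $u^*=u$ and simply symmetrizes, $\xi_j=\tfrac12(uy_j+y_ju)$; a short computation using $u|x|=|x|u$ and $u^2|x|=|x|$ shows $\xi_j$ is Hermitian, commutes with $x$, has $\|\xi_j\|_{\phi'}\le 1$, and satisfies $Tr(x\xi_j)=Tr(|x|y_j)$, with no pinching needed. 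You instead first pinch $y_i$ onto a common eigenbasis of $x$ and $|x|$ --- so that $y_i$ now commutes with both --- and then multiply by the sign operator $v$ (which is exactly the $u$ of the polar decomposition). After the pinching, $y_i$ commutes with $u=v$, so your $\xi_i=vy_i$ coincides with the paper's symmetrization $\tfrac12(uy_i+y_iu)$. The pinching step is harmless and makes the commutation completely transparent, but it is not strictly necessary: the symmetrization alone already does the job, which is the small economy the paper's proof buys.
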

\begin{proof}
For each $y\in \IFp$, let $\eta_y(z)=Tr(yz)$, then $\eta_y\in \IF^{\ast}$ and $\|\eta_y\|\le \|y\|_{\phi'}$. Consider the family
$$
{\cal F}_x=\{\eta_y: y\in \IFp,\,y\ge 0,\, y|x|=|x|y,\, \|y\|_{\phi'}\le 1\}.
$$
By the last item of the previous lemma, there exists a sequence  $\{y_j\}\subset {\cal F}_x$ such that
$$
\|x\|_{\phi}=\lim\limits_j Tr(|x|y_j)
$$
Now if $x^*=x$, write down $x=|x|u=u|x|$ (polar decomposition) and put
$$
\xi_j=\frac12 (uy_j+y_ju)\in \IFp.
$$
Note that $\|\xi_j\|_{\phi'}\le 1$, $\xi_j^*=\xi_j$ since $u^*=u$ and also that $\xi_j$ commutes with $x$. Moreover
$$
Tr(x\xi_j)=\frac12[ Tr(uy_j|z|u)+Tr(y_j u u|z|)]=Tr(|x|y_j),
$$
thus $\|x\|_{\phi}=\lim_j Tr(x\xi_j)$. Since $\eta_j\in \IF^{\ast}$ and $\|\eta_j\|\le 1$, by the Banach-Alaouglu-Bourbaki theorem, the family $\{\eta_j\}$ has a $\omega^*$-convergent subnet $\eta_i$ to a functional $\eta_x\in \IF^{\ast}$,
$$
\eta_x(z)=\lim\limits_i \eta_i(z)=\lim_i Tr(\xi_i z)
$$
for any $z\in \IF$. In particular  $\eta_x(x)=\|x\|_{\phi}$.
\end{proof}

\subsection{Uniqueness of short paths for strictly convex norms}

\begin{defi}
Let $X$ be a Banach space. A norm  $\|\cdot\|$ in $X$ is
\begin{enumerate}
 \item \textit{strictly convex} (or \textit{rotund}) if $\|x+y\|=\|x\|+\|y\|$ implies $x=\lambda y$.
\item \textit{smooth} if it is Gateaux differentiable (at each $x\ne 0\in X$) as a map from $X$ to $\mathbb R$. Equivalently, for any $x\in X$ there exists a unique $\varphi_x\in X^*$ such that $\|\varphi_x\|=1$ and $\varphi_x(x)=\|x\|$.
\end{enumerate}
\end{defi}

It is well-known that if the norm on the dual space $X^*$ is rotund, the norm in $X$ is smooth, and that if the norm in the dual space $X^*$ is smooth, then the norm in $X$ is rotund. The reciprocal of these affirmations are false, see for instance \cite[2.36]{phelps} and the references therein.

\begin{teo}
Assume that the norm $\nf$ is rotund. Let $u,v,w\in\ufi$ such that
$$
d_{\phi}(u,v)=d_{\phi}(u,w)+d_{\phi}(w,v),
$$
and assume that $d_{\infty}(u,v)<\pi$ (equivalently, $\|u-v\|<2$). Then $u,v,w$ are aligned in $\ufi$. That is, there exists $z^*=z\in \IF$ with $\|z\|<\pi$ such that
$$
v=ue^{iz},\qquad \mbox{ while }w=ue^{it_0z}
$$
for some $t_0\in [0,1]$.
\end{teo}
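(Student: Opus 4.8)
The plan is to transfer everything to a single exponential equation, apply the equality case of the triangle inequality of Proposition~\ref{triad}, and let rotundity do the rigidity. \emph{First I would reduce to $u=1$} by bi-invariance of $d_{\phi}$. As $\|v-1\|<2$, Remark~\ref{loga} gives the unique Hermitian $z\in\IF$ with $\|z\|<\pi$ and $v=e^{iz}$, and by Theorem~\ref{distancia}, $d_{\phi}(1,v)=\|z\|_{\phi}$. Pick, again via Remark~\ref{loga}, Hermitian $x,y\in\IF$ with $\|x\|,\|y\|\le\pi$, $w=e^{ix}$ and $w^{*}v=e^{iy}$; then $d_{\phi}(1,w)=\|x\|_{\phi}$ and $d_{\phi}(w,v)=d_{\phi}(1,w^{*}v)=\|y\|_{\phi}$, so the hypothesis becomes the \emph{equality} $\|z\|_{\phi}=\|x\|_{\phi}+\|y\|_{\phi}$, together with $e^{ix}e^{iy}=e^{iz}$. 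Discarding the trivial case $z=0$, it suffices to prove that $x=t_{0}z$ and $y=(1-t_{0})z$ for some $t_{0}\in[0,1]$, since then $[x,y]=0$, the identity $e^{ix}e^{iy}=e^{iz}$ holds automatically, and $w=e^{it_{0}z}$, which is the claim once we translate back by $u$.

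\emph{Then I would re-run the proof of Proposition~\ref{triad}.} Taking finite-rank truncations $x_{k}\to x$, $y_{k}\to y$ uniformly, one obtains unitaries $\tilde u_{k},\tilde v_{k}\in\bh$ and, with $a_{k}=\tilde u_{k}x_{k}\tilde u_{k}^{*}+\tilde v_{k}y_{k}\tilde v_{k}^{*}$, Hermitian $w_{k}$ with $\|w_{k}\|<\pi$, $e^{iw_{k}}=e^{ia_{k}}$, $w_{k}\to z$ uniformly and $|w_{k}|\le|a_{k}|$ (Lemma~\ref{exponentes}). Hence $\|w_{k}\|_{\phi}\le\|a_{k}\|_{\phi}\le\|x_{k}\|_{\phi}+\|y_{k}\|_{\phi}\le\|x\|_{\phi}+\|y\|_{\phi}=\|z\|_{\phi}$, and the noncommutative Fatou lemma forces $\liminf_{k}\|w_{k}\|_{\phi}\ge\|z\|_{\phi}$; so all these norms converge to $\|z\|_{\phi}$, which means that both inequalities $|w_{k}|\le|a_{k}|$ and $\|a_{k}\|_{\phi}\le\|\tilde u_{k}x_{k}\tilde u_{k}^{*}\|_{\phi}+\|\tilde v_{k}y_{k}\tilde v_{k}^{*}\|_{\phi}$ become equalities in the limit. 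Using weak-$*$ compactness of the bounded nets $\tilde u_{k},\tilde v_{k}$, lower semicontinuity of $\nf$ for the weak-$*$ topology, and compactness of $x,y$, I would pass to the limit to obtain a Hermitian $c$ with $c=UxU^{*}+VyV^{*}$, $e^{ic}=e^{iz}$, $|z|\le|c|$, $\|z\|_{\phi}=\|c\|_{\phi}$ and $\|c\|_{\phi}=\|UxU^{*}\|_{\phi}+\|VyV^{*}\|_{\phi}$.

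\emph{Finally I would extract the alignment from rotundity.} From $2|z|\le|z|+|c|\le 2|c|$ and $\|z\|_{\phi}=\|c\|_{\phi}$ one gets $\||z|+|c|\|_{\phi}=\|z\|_{\phi}+\|c\|_{\phi}$, so rotundity (comparing norms) gives $|z|=|c|$; in particular $\|c\|=\|z\|<\pi$, and Lemma~\ref{exponentes} applied to $e^{ic}=e^{iz}$ yields $c=z$. Next, the equality $\|c\|_{\phi}=\|UxU^{*}\|_{\phi}+\|VyV^{*}\|_{\phi}$ forces $\|UxU^{*}\|_{\phi}=\|x\|_{\phi}$, $\|VyV^{*}\|_{\phi}=\|y\|_{\phi}$ and, by rotundity, $UxU^{*}=\lambda\,VyV^{*}$ with $\lambda\ge 0$; substituting in $z=c=UxU^{*}+VyV^{*}$ and comparing $\phi$-norms gives $UxU^{*}=\alpha z$, $VyV^{*}=(1-\alpha)z$ with $\alpha=\|x\|_{\phi}/\|z\|_{\phi}\in[0,1]$, whence $\|x\|,\|y\|<\pi$. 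The remaining step --- upgrading ``$x$ is unitarily equivalent to $\alpha z$'' to ``$x=\alpha z$'' --- I would do by feeding this back into $e^{iy}=e^{-ix}e^{iz}$ (with $\|y\|<\pi$) and Lemma~\ref{exponentes}, together with the control on the conjugating unitaries obtained above. The \textbf{main obstacle} is precisely the limiting argument of the middle paragraph: Thompson's identity is exact only in finite dimensions, so the decomposition $z=UxU^{*}+VyV^{*}$ is only a limit, and since we assume $\nf$ rotund but not uniformly convex, ``asymptotic equality'' in the triangle inequality does not automatically yield ``asymptotic proportionality''; one must squeeze it through using weak-$*$ compactness, semicontinuity of $\nf$, and the exponential rigidity of Lemma~\ref{exponentes}. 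The related secondary difficulty is removing the unitary conjugation in the last step using the precise relation $e^{ix}e^{iy}=e^{iz}$ rather than a mere spectral comparison.
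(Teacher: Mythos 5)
Your strategy differs fundamentally from the paper's, and the obstacle you yourself flag in your last paragraph is not a technicality to be ``squeezed through'' --- it is fatal to the approach as written.

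The problematic step is the passage to the limit in your second paragraph. You want a Hermitian $c$ with $c=UxU^{*}+VyV^{*}$, $e^{ic}=e^{iz}$ and $|z|\le|c|$. But the $\tilde u_k,\tilde v_k$ from Thompson's formula are unitaries with no control whatsoever beyond $\|\tilde u_k\|=\|\tilde v_k\|=1$, so the best compactness available is WOT (or $\sigma$-weak), where the limit $U$ of a subnet of unitaries is merely a contraction and need not be a unitary at all --- indeed $0$ can occur. Even granting this, and even using compactness of $x$ to upgrade one-sided products to SOT-convergence, the full conjugate $\tilde u_kx_k\tilde u_k^{*}$ converges to $UxU^{*}$ only in WOT. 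The exponential and the modulus are not WOT-continuous, so neither $e^{ia_k}\to e^{ic}$ nor $|w_k|\le|a_k|\Longrightarrow|z|\le|c|$ passes to the limit. Without $e^{ic}=e^{iz}$ the invocation of Lemma~\ref{exponentes} that drives the rest of your argument collapses. Your closing step (removing the unitary conjugation from ``$UxU^{*}=\alpha z$'' to ``$x=\alpha z$'') is also not a proof; you describe an intention rather than an argument, and there is no obvious way to carry it out because you have lost the relation between $U$, $V$ and $x$, $y$, $z$ in the limit.

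The paper avoids all of this by never trying to realize the equality case of Thompson's inequality as an exact operator identity. Instead it uses a genuine first-variation argument: perturb along $\beta(s)=e^{iz}e^{-isy}$, set $z_s=-i\log\beta(s)$ (which is analytic for small $s$ with $z_0=z$ and $\dot z_0$ explicitly computable via the differential of $\exp$), and observe that the hypothesis $\|z\|_{\phi}=\|x\|_{\phi}+\|y\|_{\phi}$ combined with the triangle inequality for $d_\phi$ pins down $\frac{\|z\|_\phi-\|z_s\|_\phi}{s}=\|y\|_\phi$ exactly for small $s>0$. Testing against a norming functional $\eta_z$ that commutes with $z$ (Corollary~\ref{normi}) and using $\tau(\xi_i y)=-\tau(\xi_i\dot z_0)$ (from the commutation and the formula for $\dot z_0$) then yields $\|y\|_\phi=\eta_z(y)$, and rotundity applied to $\|z+y\|_\phi\ge\eta_z(z)+\eta_z(y)=\|z\|_\phi+\|y\|_\phi$ gives $y=\lambda z$ directly. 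This sidesteps both the nonexistence of a limiting Thompson decomposition and your secondary difficulty. You should redo the proof along these lines; the reduction in your first paragraph is fine and is exactly where the paper starts.
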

\begin{proof}
We assume that $u=1$. Write $v=e^{iz}$ with $\|z\|<\pi$. Let $w=e^{ix}
=e^{iz}e^{-iy}$, with $\|y\|\le \pi$, $\|x\|\le \pi$ and $x,y$ Hermitian operators in $\IF$. That is $v=e^{iz}=e^{ix}e^{iy}$. Then our assumption is
$$
\|z\|_{\phi}=\|x\|_{\phi}+\|y\|_{\phi}.
$$
Note that $\|y\|_{\phi}\le \|z\|_{\phi}$, likewise, $\|x\|_{\phi}\le \|z\|_{\phi}$. To prove the claim of the theorem, it suffices to show that $y=\lambda z$, since in that case,
$$
w=e^{ix}=e^{iz}e^{-iy}=e^{i(1-\lambda)z},
$$
and we can pick $t_0=1-\lambda$. In fact, if $y=\lambda z$, then since $\|y\|_{\phi}\le \|z\|_{\phi}$, we obtain $|\lambda|\le 1$, thus $t_0=1-\lambda\ge 0$, and on the other hand, since
$$
(1-\lambda)\|z\|_{\phi}=\|x\|_{\phi}\le \|z\|_{\phi}
$$
we also have $t_0=1-\lambda\le 1$.

We will use a standard variation technique to show that $y=\lambda z$. Consider the geodesic triangle in $\ufi$ spanned by $1,e^{ix}$ and $e^{iz}=e^{ix}e^{iy}$. Let
$$
\beta(s)=e^{ix}e^{i(1-s)y}=e^{iz}e^{-isy}.
$$
Since $\|z\|<\pi$, if $s\ge 0$ is small enough then
$$
\|\beta(s)-1\|\le \|e^{iz}-1\|+\|e^{isy}-1\|<2.
$$
Thus for small $s\ge 0$, the element $z_s=(-i)\log(\beta(s))$ is an analytic function of $s$, is Hermitian in $\IF$ and $\|z_s\|<\pi$. In particular, $z_0=z$, and moreover from $e^{i z_s}=e^{iz}e^{-isy}$ we obtain
$$
\frac{d}{ds}\vrule\mathop{}_{\;s=0}e^{iz_s}=-e^{iz} iy.
$$
If we recall the formula for the differential of the exponential map in the unitary group
$$
d\exp_x(y)=\int_0^1 e^{(1-t)x} ye^{tx}dt
$$
for $x,y\in \bh_{ah}$ (see for instance \cite[Lemma 3.3]{upe}), we obtain
\begin{equation}\label{laderi}
-y=\int_0^1 e^{-tz}\dot{z}_0 e^{tz}dt.
\end{equation}
Now by Theorem \ref{distancia}, $\|z_s\|_{\phi}=d_{\phi}(1,\beta(s))$, and by the triangle inequality for the distance $d_{\phi}$, we have
$$
\|x\|_{\phi}+(1-s)\|y\|_{\phi}=\|z\|_{\phi}-s\|y\|_{\phi}\le \|z_s\|_{\phi}\le \|x\|_{\phi}+(1-s)\|y\|_{\phi}.
$$
Thus, for any $s\ge 0$ sufficiently small,
$$
\frac{-\|z_s\|_{\phi}+\|z\|_{\phi}}{s}=\|y\|_{\phi}.
$$
Since $z_s=z+\dot{z}_0s+o(s^2)$, then $\|z_s\|_{\phi}\ge \|z+\dot{z}_0s\|-o(s^2)$ for small $s\ge 0$, and it follows that
$$
\|y\|_{\phi}\le \frac{\|z\|_{\phi}-\|z+\dot{z}_0s\|}{s}+o(s).
$$
Let $\{\xi_i\}\subset\IFp$ be a net as in Corollary \ref{normi} such that $\|z\|_{\phi}=\lim_i Tr(z\xi_i)=\eta_z(z)$. Then
$$
\|z+\dot{z}_0s\|\ge \tau (\xi_i (z+\dot{z}_0s))=\tau(\xi_i z)+s\tau(\dot{z}_0\xi_i).
$$
Then
$$
\|y\|_{\phi}\le  \frac{\|z\|_{\phi}-\tau( \xi_i z)}{s}- \tau(\xi_i \dot{z}_0)+ o(s).
$$
Since the $\xi_i$ and $z$ commute, by equation (\ref{laderi}) we have $\tau(\xi_i y)=-\tau(\xi_i \dot{z}_0)$ for any $i$. Taking limit on $i$ we obtain
$$
\|y\|_{\phi}\le   \eta_z(y)+ o(s)\le \|y\|_{\phi}+o(s)
$$
since $\|\eta_z\|\le 1$. Letting $s\to 0^+$ we obtain $\|y\|_{\phi}=\eta_z(y)$. To finish, note that
$$
\|z+y\|_{\phi}\ge \eta_z(z+y)=\eta_z(z)+\eta_z(y)=\|z\|_{\phi}+\|y\|_{\phi},
$$
which shows that $y=\lambda z$, since we are assuming that the norm $\nf$ is rotund.
\end{proof}

\begin{coro}\label{esunica}
If the norm $\nf$ is rotund, and $z$ is a Hermitian element of $\IF$ such that $\|z\|<\pi$, then $\delta(t)=ue^{itz}$ is the unique piecewise $C^1$ curve in $\ufi$ joining $u$ to $v=ue^{iz}$ with  length equal to $d_{\phi}(u,v)=\|z\|_{\phi}$. 
\end{coro}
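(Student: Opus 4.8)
The plan is to deduce this directly from the preceding theorem on aligned triples. By bi-invariance of $d_{\phi}$ we may assume $u=1$, so we must show that if $\gamma\colon[0,1]\to\ufi$ is piecewise $C^{1}$ with $\gamma(0)=1$, $\gamma(1)=v=e^{iz}$ and $L_{\phi}(\gamma)=d_{\phi}(1,v)=\|z\|_{\phi}$, then $\gamma$ is a monotone reparametrization of $\delta(t)=e^{itz}$ (so in particular it has the same image, and coincides with $\delta$ once parametrized with constant speed). The case $z=0$ is trivial, since then $L_{\phi}(\gamma)=0$ forces $\dot\gamma\equiv 0$ a.e. and $\gamma$ is constant; so assume $z\neq 0$, and note $d_{\infty}(1,v)=\|z\|<\pi$, which is exactly the hypothesis needed to invoke the preceding theorem.

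First I would show that every point of $\gamma$ lies on the geodesic segment. Fix $t$ and set $w=\gamma(t)$. Splitting $\gamma$ at $t$ and using that each half is admissible for the rectifiable distance, $d_{\phi}(1,w)+d_{\phi}(w,v)\le L_{\phi}(\gamma|_{[0,t]})+L_{\phi}(\gamma|_{[t,1]})=L_{\phi}(\gamma)=d_{\phi}(1,v)$, while the reverse inequality is the triangle inequality; hence $d_{\phi}(1,w)+d_{\phi}(w,v)=d_{\phi}(1,v)$. Since $d_{\infty}(1,v)<\pi$, the preceding theorem produces a Hermitian $z'\in\IF$ with $\|z'\|<\pi$ such that $v=e^{iz'}$ and $w=e^{it_{0}z'}$ for some $t_{0}\in[0,1]$. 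Uniqueness of the Hermitian logarithm of norm $<\pi$ (Remark~\ref{loga}, or Lemma~\ref{exponentes}) forces $z'=z$, so $\gamma(t)\in\{e^{isz}:s\in[0,1]\}$ for every $t$.

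Next I would analyse the parametrization. Because $z\neq 0$ and $\|z\|<\pi$, the map $s\mapsto e^{isz}$ is injective on $[0,1]$ (if $e^{isz}=e^{is'z}$ then $\|(s-s')z\|\le\|z\|<\pi$ and $e^{i(s-s')z}=1$ force $(s-s')z=0$), so the previous step defines $f\colon[0,1]\to[0,1]$ unambiguously by $\gamma(t)=e^{if(t)z}$, with $f(0)=0$, $f(1)=1$. To see $f$ is piecewise $C^{1}$ I would pick a unit eigenvector $\xi$ of $z$ with eigenvalue $\lambda\neq 0$ (so $|\lambda|\le\|z\|<\pi$) and observe that $\langle\gamma(t)\xi,\xi\rangle=e^{if(t)\lambda}$; since $|f(t)\lambda|<\pi$ this scalar stays off the negative real axis, and since $\gamma$ is piecewise $C^{1}$ (hence so is this matrix coefficient, using Remark~\ref{compare}), applying the principal argument gives $f=\lambda^{-1}\,\mathrm{Arg}\langle\gamma(\cdot)\xi,\xi\rangle$, which is piecewise $C^{1}$. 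On each smooth piece, differentiating $\gamma(t)=e^{if(t)z}$ and using $[z,e^{if(t)z}]=0$ gives $\gamma^{*}(t)\dot\gamma(t)=if'(t)z$, so $\|\dot\gamma(t)\|_{\phi}=|f'(t)|\,\|z\|_{\phi}$; hence $\|z\|_{\phi}=L_{\phi}(\gamma)=\|z\|_{\phi}\int_{0}^{1}|f'(t)|\,dt$. Since $\|z\|_{\phi}\neq 0$ this yields $\int_{0}^{1}|f'(t)|\,dt=1=\int_{0}^{1}f'(t)\,dt$ (fundamental theorem of calculus on each piece, $f(1)-f(0)=1$), so $f'\ge 0$ a.e.\ and $f$ is nondecreasing; being continuous with $f(0)=0$, $f(1)=1$ it is onto $[0,1]$, so $\gamma=\delta\circ f$ is a monotone reparametrization of $\delta$.

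The main obstacle is mild: essentially all the substance has already been packed into the preceding theorem, and the only genuine point is to upgrade the set-theoretic conclusion ``$\gamma$ lives on the segment'' to a statement about the parametrization — that is, to show the induced parameter function $f$ is regular enough (piecewise $C^{1}$) that the length identity of Theorem~\ref{distancia} can be differentiated and forced to be monotone. Extracting the scalar coordinate of $\gamma$ along an eigenvector of $z$ is the device I would use to handle exactly this point.
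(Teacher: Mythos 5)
Your proof is correct, and its first paragraph follows the same central idea as the paper's: for each interior time $t$, split $\gamma$ at $\gamma(t)$ and show that the triple $(1,\gamma(t),v)$ realizes equality in the triangle inequality for $d_{\phi}$, so that the preceding aligned-triple theorem (combined with uniqueness of the Hermitian logarithm of uniform norm $<\pi$) forces $\gamma(t)$ onto the one-parameter group $\{e^{isz}: s\in[0,1]\}$. (The paper reaches the same conclusion via Proposition \ref{poli} and Theorem \ref{distancia} applied to the two pieces of $\gamma$, which is an equivalent phrasing of the same estimate.) Where you go genuinely beyond the paper is the second paragraph: the paper stops at the pointwise containment and says the claim ``follows from the previous theorem,'' leaving the reader to reconcile the literal statement of uniqueness of the curve with the fact that any monotone reparametrization of $\delta$ also has length $\|z\|_{\phi}$. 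Your device of extracting the scalar coordinate $\langle\gamma(\cdot)\xi,\xi\rangle$ along a nonzero eigenvector of $z$ (legitimate since $z\ne 0$ is compact Hermitian, so it has a nonzero eigenvalue $\lambda$ with $|\lambda|<\pi$) shows that the induced parameter $f$ is piecewise $C^1$, and the identity $\int_0^1|f'|=1=\int_0^1 f'$ then forces $f'\ge 0$ a.e., giving $\gamma=\delta\circ f$ with $f$ a nondecreasing surjection. This is precisely the step the paper elides, and your argument settles it cleanly and correctly; it also makes explicit that ``unique'' should be read as ``unique up to monotone reparametrization.'' No gaps.
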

\begin{proof}
Assume that $\gamma$ is any short, piecewise smooth curve joining $1$ to $e^{iz}$. Let $t_0\in (0,1)$ and let $\gamma(t_0)=e^{ix}=e^{iz}e^{-iy}$, with $\|y\|\le \pi$, $\|x\|\le \pi$. By Proposition \ref{poli} and Theorem \ref{distancia} applied to each segment,
$$
\|z\|_{\phi}\le \|x\|_{\phi}+\|y\|_{\phi}\le \int_0^{t_0}\|\dot{\gamma}\|_{\phi} +\int_{t_0}^1\|\dot{\gamma}\|_{\phi}=L_{\phi}(\gamma)=\|z\|_{\phi},
$$
hence $\|x\|_{\phi}+\|y\|_{\phi}=\|z\|_{\phi}$. Now the claim of the corollary follows form the previous theorem.
\end{proof}

\begin{rem}
This result generalizes  \cite[Theorem 3.2]{upe} for the $p$-norms ($p\ge 2$), where it was proved using a particular variational calculus for the $p$-norms, see also \cite{mlr}.
\end{rem}

\begin{rem}
If $\|u-v\|=2$ (equivalently, if $v=ue^{iz}$ with $\|z\|=\pi$), then the segment $\delta(t)=ue^{itz}$ is not unique as a short path joining $u,v$. In fact, it suffices to consider the case $v=1$. Since the condition $\|u-1\|=2$ is equivalent to $-2\in \sigma(u-1)$, if $x=u-1\in \IF\subset\kh$, this is equivalent to $-1\in\sigma(x)$. Then, as in Remark \ref{logaritmo},
$$
u=1+x=p_0+\sum_{k\ge 1} e^{i t_k}p_k=1+\sum_{k\ge 1}(e^{it_k}-1)p_k
$$
where $\{p_k\}_{k\ge 0}$ are disjoint projections (finite dimensional for $k\ge 1$, and $|t_k|\le\pi$, $t_k\to 0$). Let $p_1$ be the finite rank projection to the eigenspace of $\lambda=-1$, then we can assume that $t_1=\pi$, and $|t_k|<\pi$ for $k\ge 2$.  If the rank of $p_0$ is one, there are exactly two logarithms of $u$ in $\IF$ (with uniform norm equal to $\pi$), corresponding to 
$$
z={\mp} i \pi p_1+i\sum_{k\ge 2} t_k p_k.
$$
However, if $rank(p_0)=n\ge 2$, then there are an infinite number of decompositions $p_1=q+q'
$ into proper disjoint projections, which allows to choose an infinite number of logarithms $z\in \IF$,
$$
z=  i \pi q -i\pi q'+  i\sum_{k\ge 1} t_k p_k
$$
of norm equal to $\pi$.
\end{rem}

\begin{rem}
Clearly, these results of existence and uniqueness of short geodesics joining given endpoints (Theorem \ref{distancia} and Corollary \ref{esunica}) hold verbatim if we replace $\ufi$ by $\ufi^{(0)}$, the group associated to the minimal ideal which is the closure in the symmetric norm of finite rank operators. The same can be said for symmetric norms in matrix algebras, that is, if the dimension of the Hilbert space $\cal H$ is finite, a result which has interest on its own right.
\end{rem}

\bigskip

\vspace*{1cm}

\noindent

\hspace*{.4cm}\begin{minipage}{8cm}
Jorge Antezana:\\
Universitat Aut\'onoma de Barcelona.\\
Departamento de Matem\'atica,\\
Facultad de Ciencias\\
Edificio C Bellaterra (08193) \\
Barcelona, Espa\~{n}a.\\
e-mail: jaantezana@mat.uab.cat\\

\bigskip

Gabriel Larotonda and Alejandro Varela:\\
Instituto de Ciencias \\
Universidad Nacional de General Sarmiento. \\
J. M. Gutiérrez 1150 \\
(B1613GSX) Los Polvorines, \\
Buenos Aires, Argentina.  \\
e-mails: glaroton@ungs.edu.ar,\\
avarela@ungs.edu.ar\\

\end{minipage}
\hspace*{.1cm}
 \begin{minipage}{6cm}
 \vspace*{-4.7cm}
 J. Antezana, G. Larotonda\\
 and A. Varela:\\
 Instituto Argentino de Matem\'atica\\
 ``Alberto Calder\'on'', CONICET\\
 Saavedra 15, 3er piso\\
 (C1083ACA) Buenos Aires,\\
 Argentina.\\
 
 \end{minipage}


\begin{thebibliography}{XX}

\bibitem{upefinita} E. Andruchow, E. Chiumiento, G. Larotonda, {\it Homogeneous manifolds from noncommutative measure spaces}. J. Math. Anal. Appl. (2010), in press.

\bibitem{odospe} E. Andruchow, G. Larotonda, {\it Hopf-Rinow theorem in the Sato Grassmannian}, J. Funct. Anal. 255 (2008), no.7, 1692-1712.


\bibitem{unifred} E. Andruchow, G. Larotonda, {\it The rectifiable distance in the unitary Fredholm group}. Studia Math. 196 (2010), 151-178.

\bibitem{upe}  E. Andruchow, G. Larotonda, L. Recht, {\it Finsler geometry and actions of the Schatten unitary groups}, Trans. Amer. Math. Soc. 62 (2010), 319-344.

\bibitem{alv}  J. Antezana, G. Larotonda, A. Varela, {\it Optimal paths for symmetric actions in the unitary group}, preprint  arXiv:1107.2439v1 (2011).

\bibitem{belti} D. Beltita, {\it Iwasawa decompositions of some infinite-dimensional Lie groups}. Transactions of the American Mathematical Society 361 (2009), no. 12, 6613-6644.

\bibitem{brgafa} D. Beltita, T.S. Ratiu, {\it Symplectic leaves in real Banach Lie-Poisson spaces}. Geometric and Functional Analysis 15 (2005), no. 4, 753-779.

\bibitem{bhatia} R. Bhatia. Matrix analysis. Graduate Texts in Mathematics, 169. Springer-Verlag, New York, 1997.

\bibitem{dykema} K. Dykema, T. Figiel, G. Weiss, M. Wodzicki, {\it Commutator structure of operator ideals}, Adv. in Math. 185 (1) (2004), 1--79.

\bibitem{fulton} W. Fulton, {\it Eigenvalues, invariant factors, highest weights, and Schubert calculus}. Bull. Amer. Math. Soc. (N.S.)  37  (2000), no. 3, 209--249 (electronic).

\bibitem{gohberg} I. C. Gohberg, M. G. Krein.  Introduction to the theory of linear nonselfadjoint operators. Translated from the Russian by A. Feinstein. Translations of Mathematical Monographs, Vol. 18 American Mathematical Society, Providence, R.I. 1969.

\bibitem{kadimean} R.V. Kadison, G.K. Pedersen, {\it Means and convex combinations of unitary operators}. Math. Scand. 57 (1985), no. 2, 249--266.

\bibitem{dharpe} P. de la Harpe. Classical Banach-Lie algebras and Banach-Lie groups of operators in Hilbert space. Lecture Notes in Mathematics, Vol. 285. Springer-Verlag, Berlin-New York, 1972.

\bibitem{mlr} L.E. Mata-Lorenzo, L. Recht, {\it Convexity properties of ${\rm Tr}[(a\sp *a)\sp n]$}. Linear Algebra Appl. 315 (2000), no. 1-3, 25--38.

\bibitem{phelps} R. Phelps. Convex functions, monotone operators and differentiability. Second edition. Lecture Notes in Mathematics, 1364. Springer-Verlag, Berlin, 1993.

\bibitem{phil} N.C. Phillips. A survey of exponential rank, C* -Algebras: 1943-1993, A Fifty Year Celebration. Contemporary Math., Vol. 167, Amer. Math. Soc, Providence RI, 1994,
353-399.

\bibitem{pr} H. Porta, L. Recht, {\it Minimality of geodesics in Grassmann manifolds}, Proc. Amer. Math. Soc. 100 (1987), no. 3, 464-466.

\bibitem{simon} B. Simon. Trace ideals and their applications. Second edition. Mathematical Surveys and Monographs, 120. American Mathematical Society, Providence, RI, 2005.

\bibitem{thompson} R.C. Thompson, {\it Proof of a conjectured exponential formula}. Linear and Multilinear Algebra  19  (1986), no. 2, 187-197.

\bibitem{zhang1} C.-K. Li, L. Qiu, Y. Zhang,  {\it Unitarily invariant metrics on the Grassmann space}. SIAM J. Matrix Anal. Appl.  27  (2005), no. 2, 507--531 (electronic).

\bibitem{zhang2} L. Qiu, Y. Zhang, {\it On the angular metrics between linear subspaces}.  Linear Algebra Appl. 421 (2007), no. 1, 163--170.

\end{thebibliography}
\end{document}